\documentclass[10pt, article]{amsart}

\usepackage{tikz}
\usetikzlibrary{shadings}
\usepackage{pgfplots}
\usetikzlibrary{calc}
\usetikzlibrary{arrows}

\usepackage{ae} 
\usepackage[T1]{fontenc}
\usepackage[cp1250]{inputenc}
\usepackage{amsmath}
\usepackage{amssymb, amsfonts,amscd,verbatim}

\usepackage{indentfirst}
\usepackage{latexsym}
\input xy
\xyoption{all}

\usepackage{amsmath}    

\theoremstyle{plain}
\newtheorem{Prop}{Proposition}[section]
\newtheorem{Proposition}[Prop]{Proposition}

\newtheorem{Theorem}[Prop]{Theorem}

\newtheorem{Corollary}[Prop]{Corollary}

\theoremstyle{definition}
\newtheorem{Def}[Prop]{Definition}
\newtheorem{Definition}[Prop]{Definition}

\theoremstyle{remark}
\newtheorem{Rem}[Prop]{Remark}
\newtheorem{Remark}[Prop]{Remark}

\newcommand{\diam}{\operatorname{Diam}\nolimits}

\newcommand{\Rips}{\operatorname{Rips}\nolimits}
\newcommand{\cRips}{\operatorname{\overline{Rips}}\nolimits}

\def\RR{{\mathbb R}}
\def\FF{{\mathbb F}}

\def\II{{\mathbb I}}

\def\NN{{\mathbb N}}
\def\P{{\mathcal P}}

\def\L{{\mathcal L}}

\def\eps{{\varepsilon}}
\def\s{\sigma}

 \def\size{\mathop{\textrm{size}}\nolimits}
  
\def\length{\mathop{\textrm{length}}\nolimits}

\def\argmin{\mathop{\textrm{argmin}}}

\errorcontextlines=0
\numberwithin{equation}{section}
%

\title[Approximations of 1-Dimensional ...]
{Approximations of 1-Dimensional Intrinsic Persistence of geodesic spaces and their stability}


\author{\v Ziga ~Virk}
\address{University of Ljubljana}
\email{zigavirk@gmail.com}

\thanks{Most of the research was conducted while the author was a postdoctoral researcher in the group of prof. Edelsbrunner at IST Austria. 
Research was also partially supported by Slovenian Research Agency grant No. J1-8131.}

\begin{document}

\maketitle
\begin{center}
\today
\end{center}

\begin{abstract}
A standard way of approximating or discretizing a metric space is by taking its Rips  complexes. These approximations for all parameters are often bound together into a filtration, to which we apply the fundamental group or the first homology. We call the resulting object  persistence. 

Recent results demonstrate that  persistence of a compact geodesic locally contractible space $X$ carries a lot of geometric information. However, by definition the corresponding Rips complexes have uncountably many vertices. In this paper we show that nonetheless, the whole persistence of $X$ may be obtained by an appropriate finite sample (subset of $X$), and that persistence of any subset of $X$ is well interleaved with the persistence of $X$. It follows that the persistence of $X$ is the minimum of persistences obtained by all finite samples. Furthermore, we prove a much improved Stability theorem for such approximations. As a special case we provide for each $r>0$ a density $s>0$, so that for each $s$-dense sample  $S \subset X$ the corresponding fundamental group (and the first homology) of the Rips complex of $S$ is isomorphic to the one of $X$, leading to an improved  reconstruction result.
\end{abstract}

\section{Introduction}

Given a metric space $X$, its induced filtration $\{\Rips(X,r)\}_{r>0}$ by Rips complexes (also called Vietoris-Rips complexes, see Definition \ref{DefComplexes}) is at the heart of  modern topological data analysis (TDA) and has also appeared in other contexts, such as shape theory and coarse geometry \cite{Comb}. Such filtrations are often preferred to \v Cech filtrations due to their computational and combinatorial simplicity, although an absence of the corresponding Nerve theorem makes a relation to the underlying space a bit more complicated. It was nevertheless shown that for small parameters $r$ and nice spaces, the Rips complex captures the homotopy type of a space \cite{Haus, Lat, Att}.

In this paper we will study the induced $1$-dimensional persistence (referred to as just persistence throughout the paper) obtained by applying the fundamental group or the first homology to such filtration of a pointed compact geodesic space $(X, \bullet)$.  The core theory of such persistence was developed in \cite{ZV}, where a connection to geometric properties (lengths of geodesic) and structural properties were described. These connections play a role, similar to that  of the Nerve Theorem, in the context of Rips complexes: for each parameter $r$ they explain the geometric features captured by the construction of the corresponding Rips complex (and in the entire persistence). As such they support and develop a geometric understanding about what information do the methods of TDA actually extract.

The aim of this paper is to describe how  persistence of such a space, with the vertex sets of the underlying Rips complexes being uncountable, may be related to a persistence of a finite subset (sample). There are two reasons for establishing such a relation. First, the corresponding results explain how such persistence may be computed using finite samples, paving the way for a future implementation of the process. Second, it allows us to deduce various finiteness results about persistence, some of which we are to explore in the future work. For example, it was proved in \cite{Lat} that for each closed Riemannian manifold there exists $\eps_0 >0$ so that the Rips complex of each finite $\eps_0$-dense subset (see Section 2 paragraph 4 for the definition) is homotopy equivalent to $X$. Since it was known before \cite{Haus}, that for small parameters the Rips complex of such a manifold is homotopy equivalent to the manifold itself, the result of \cite{Lat} essentially states that the Rips complex for small parameters can, up to homotopy, be obtained by finite samples. In this paper we obtain such a result for the fundamental group (and consequently the first homology group) of Rips complexes of any geodesic space for  any parameter (Theorem \ref{ThmRealizRips}): such a group may be obtained from a sufficiently dense sample, where the required density depends on the closest critical value on the left. This, for example, implies that all such fundamental groups are finitely presented.   For the reconstructions of the fundamental group of a semi-locally simply connected space it follows from \cite{ZV} that we also get an explicit geometric bound: if $S\subset X$ is $s$-dense, where $6s$ is less than $\ell$ (with $\ell$ being the length of the shortest non-contractible loop), then $\pi_1(\Rips(S,\ell/3), \bullet) \cong \pi_1(X, \bullet)$.

The other main insights of the paper are the following:  
\begin{enumerate}
 \item Section 3:  The persitence of any $s$-dense subset of $X$ is nicely interleaved with the one of $X$, with many maps being surjective;
 \item Theorem \ref{ThmRealizPers}: For each $p>0$ the entire persistence for parameters $r>p$ may be obtained from a finite sample. Such sample is obtained by adding to a sufficiently dense sample three equidistant points onto each corresponding critical geodesic circle.
 \item Theorem \ref{ThmSamplingMin}: Persistence of $X$ is the minimum of persistences obtained by finite samples (with respect to a natural total order to be defined later).
  \item Theorem \ref{ThmStab}: Approximations by finite samples are much more stable than the standard Stability result suggest, see Figure \ref{FigStab} for a visualisation.
\end{enumerate}

The field of TDA is usually concerned with computing persistent homology. Results of this paper  show that even a persistent fundamental group of geodesic spaces, mentioned as $\pi_1$-persistence or just persistence throughout this paper, may be within reach. Papers \cite{BD} and \cite{E} contain computations of groups that may be applied to our setting and \cite{Le} provides a setting in which the persistent fundamental group was first applied.

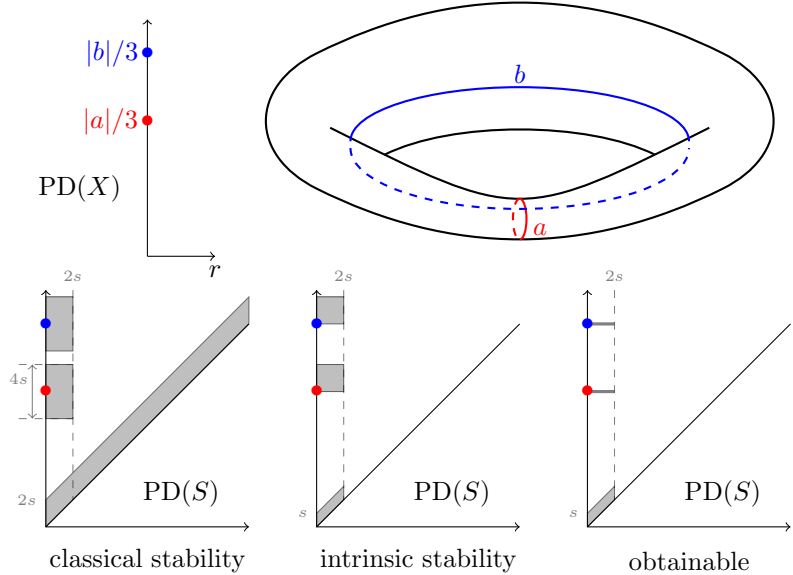
\begin{figure}
\begin{tikzpicture}[scale=.9]
\draw [thick] (-3,0) ..controls (-1,-1)and(1,-1)..
(3,0)..controls(4,.5)and(4, 1.5)..
(3,2)..controls(1,3)and(-1, 3)..
(-3,2)..controls(-4,1.5)and(-4, .5)..
cycle;
\draw [thick](-2.8,.9) ..controls (0,-.5)..(2.8,.9);
\draw  [thick](-2,.5) ..controls (-1,1)and(1,1)..(2,.5);
\draw [red, thick](0,-.75) arc (-90:90:.1 and .3);
\draw [red, thick, dashed](0,-.15) arc (90:270:.1 and .3);
\draw [red](0.3, -.6) node {$a$};
\draw [blue, thick](2.5,.7) arc (0:180:2.5 and .8);
\draw [blue, thick, dashed](-2.5,.6) arc (180:360:2.5 and .9);
\draw [blue](0, 1.7) node {$b$};
\draw[->] (-5.5, -1) to (-4.5, -1) node[below]{$r$};
\draw[->] (-5.5, -1) to (-5.5, 2.5) node[above]{};
\draw[red] (-5.5, 1) node {$\bullet$} node[left]{$|a|/3$};
\draw[blue] (-5.5, 2) node {$\bullet$}node[left]{$|b|/3$};
\draw (-6.5, 0) node{PD($X$)};

\draw (-5.5, -5.5) node{classical stability};
\draw (-1.5, -5.5) node{intrinsic stability};
\draw (2.5, -5.5) node{obtainable};
\draw[dashed, opacity=.5] (-7.4, -2.6)node[below ]{\tiny$4s$}to(-6.6, -2.6);
\draw[dashed, opacity=.5] (-7.4, -3.4)to(-6.6, -3.4);
\draw[thin, opacity=.5, <->] (-7.2, -3.4)to(-7.2, -2.6);
\draw [fill=gray, opacity=.5] (-7,-3.4)--(-7, -2.6)--(-6.6, -2.6)--(-6.6, -3.4)--cycle;
\draw [fill=gray, opacity=.5] (-7,-2.4)--(-7, -1.6)--(-6.6, -1.6)--(-6.6, -2.4)--cycle;
\draw [fill=gray, opacity=.5] (-3,-3)--(-3, -2.6)--(-2.6, -2.6)--(-2.6, -3)--cycle;
\draw [fill=gray, opacity=.5] (-3,-2)--(-3, -1.6)--(-2.6, -1.6)--(-2.6, -2)--cycle;
%
\draw [very thick, gray] (1,-3)--(1.4, -3);
\draw [very thick, gray] (1,-2)--(1.4, -2);
\draw [fill=gray, opacity=.5] (-7,-5)--(-7, -4.6)node[left]{\tiny$2s$}--(-4, -1.6)--(-4, -2)--cycle;
\draw [fill=gray, opacity=.5] (-3,-5)--(-3, -4.8)node[left]{\tiny$s$}--(-2.6, -4.4)--(-2.6, -4.6)--cycle;
\draw [fill=gray, opacity=.5] (1,-5)--(1, -4.8)node[left]{\tiny$s$}--(1.4, -4.4)--(1.4, -4.6)--cycle;
\foreach \x in {-7, -3, 1}
{
\draw[->] (\x, -5) to (\x+3, -5) node[below]{};
\draw[->] (\x, -5) to (\x, -1.5) node[above]{};
\draw (\x, -5) to (\x+3, -2);
\draw[red] (\x, -3) node {$\bullet$} node[left]{};
\draw[blue] (\x, -2) node {$\bullet$}node[left]{};
\draw (\x + 2, -4.5) node{PD($S$)};
\draw[dashed, opacity=.5] (\x+.4, -1.5)node[above]{\tiny$2s$}to(\x+.4, -4.6);
}

\end{tikzpicture}
\caption{A sketch of the stability result for an $s$-dense subset $S \subset X$. The classical stability using the Gromov-Hausdorff distance $s$ between $S$ and $X$ implies the matched points will be in the gray region (left diagram). For the intrinsic setting the gray region is significantly smaller by Theorem \ref{ThmStab}, meaning the approximation of the persistence diagram is much better (central diagram). The death times, which are the only information contained in $PD(X)$ can actually be obtained precisely due to Theorem \ref{ThmRealizPers} (the diagram on the right).}
\label{FigStab}
\end{figure}

\textbf{Overview.} In Section 2 we set up the preliminary notations and results. In Section 3 we establish the core interleaving and its properties. In Section 4 we prove that persistence can be essentially obtained by appropriate finite samples. We then use these results in Section 5 to state how the intrinsic persistence is the minimal persistence obtained by finite samples, and in Section 6 to show that such sampling is very stable. We conclude with some remarks on generalizations and future work.

\section{Preliminaries}

We will briefly recall the setting of intrinsic persistence, for geodesic spaces. For more details see \cite{ZV}. As a reference for topological notions we suggest \cite{Hat}.

Let $(X,d)$ be a geodesic metric space, i.e., a space such that for each $x,y\in X$ there exists a path, called \textbf{geodesic}, from $x$ to $y$ of length $d(x,y)$. Examples of such spaces include Riemannian manifolds and appropriately metrized complexes. A basepoint will be denoted by $\bullet \in X$. For $A\subset X$ we define the diameter as $\diam(A)=\sup_{x,y\in A}d(x,y)$. For $X \in  X$ and $r\geq 0$  the notation $B(x,r)$ represents the open ball. A \textbf{geodesic circle} in $X$ is an isometrically embedded circle $C$ of positive circumference, where the metric on $C$ as the domain of such embedding is the geodesic metric. In particular, for any two points on a geodesic circle there exists a geodesic between them that is contained in $C$.

Given a loop $\alpha$ in $X$, the induced homotopy class (based, if $\alpha$ is based) is referred to as $[\alpha]_{\pi_1}$ and the induced cycle in $H_1(X;G)$ is referred to as $[\alpha]_G$. The concatenation of loops or paths $\alpha$ and $\beta$ is denoted by $\alpha * \beta$. When concatenating paths  we naturally require that the endpoint of $\alpha$ is the starting point of $\beta$. Given a path $\alpha \colon [0,a]\to X$ the inverse path $\alpha^-\colon [0,a]\to X$  is defined by $\alpha^-(t)=\alpha(a-t)$. A \textbf{lasso} is a path of the form $\alpha * \beta * \alpha^-$, where $\alpha$ is a path and $\beta$ is a loop based at the endpoint of $\alpha$. A lasso $\alpha * \beta * \alpha^-$ is geodesic if $\beta$ is a geodesic circle. Given $r>0$ we define $\L(X,r,\pi_1)\leq \pi_1(X,\bullet)$ as the subgroup generated by all lassos $\alpha * \beta * \alpha^-$ based at $\bullet$ for which $\length(\beta)< r$. We also set $\L(X, fin, \pi_1)=\bigcup_{n\in \NN}\L(X, n, \pi_1)$.

Given $s>0$ we say $S\subset X$ is $s$-\textbf{dense} if $\forall x\in X \ \exists y\in S: d(x,y)<s$, or equivalently, if $\{B(y,s)\}_{y\in S}$ is a cover of $X$.

\begin{Def}\label{DefComplexes}
Given  $r>0$ we define the \textbf{(Open) Rips} (also called Vietoris-Rips) \textbf{complex} as a  complexes with vertex set $
X$, so that $\s\subset X$ belongs to the complex if and only if $ Diam(\s) < r$.
\end{Def}

We will call $1$-dimensional simplices edges, and $2$-dimensional simplices triangles. The following definition partially appeared in \cite{ZV}.

\begin{Def}\label{DefFiltration}
The \textbf{(open) Rips filtration} of $X$ is a collection $\{\Rips(X,r)\}_{r>0; r\in\RR}$ along with naturally induced simplicial \textbf{bonding} maps $i_{p,q}\colon \Rips(X,p)\to \Rips(X,q)$ for all $p<q$, which are identities on vertices. Considering this induced filtration as a category, we will apply various functors $\P$ to obtain persistent objects (e.g., persistent groups or persistent vector spaces). We will refer to these persistent objects as just \textbf{persistence}, when the algebraic context is clear. In this paper we will  focus on $\pi_1$-persistence and $H_1$-persistence, both of which are persistent groups. If $\FF$ is a field, then $H_1(\_; \FF)$-persistence consists of vector spaces and is usually referred to as a persistence module. An \textbf{isomorphism} $f$ between persistences $\{A_r\}_{r>0}$ and $\{B_r\}_{r>0}$ is a collection of isomorphisms $f_r \colon A_r \to B_r$ which commutes with the corresponding bonding maps. The isomorphism is denoted by $\{A_r\}_{r>0}\cong \{B_r\}_{r>0}$.

Given a persistence we say that $p>0$ is a:
\begin{enumerate}
 \item \textbf{left critical value}, if for all small enough $\eps>0$ the map $i_{p-\eps,p}$ is not an isomorphism (in the category determined by $\P$);
  \item \textbf{right critical value}, if for all  $\eps>0$, the map $i_{p,p+\eps}$ is not an isomorphism;
\end{enumerate}
We call $p>0$ a \textbf{critical value}, if it is either of the above.

Suppose now that $\FF$ is a field and that for all $r>0$,  vector spaces $H_1(\Rips(X; r); \FF)$ are finitely generated. In this case the corresponding persistence (persistence module) decomposes as a sum of elementary intervals  $\II_{\langle b,d \rangle}$, each of which is a  persistence module of the form
$$
(\II_{\langle b,d \rangle})_r = 
\begin{cases}
 \FF, \quad r\in \langle b,d \rangle \\
 0, \quad r\notin \langle b,d \rangle,
\end{cases}
$$ 
with bonding maps being surjective (identities or trivial) and with $ \langle b,d \rangle$ denoting a general interval (with either endpoint being open or closed). In \cite{ZV} it was proved that in the case of a compact geodesic space,  all intervals are open on left and closed on right.
 \textbf{Persistence diagram} $PD(X,\FF)$ is a set consisting of points $(b,d)\in \RR^2, (b<d)$, which appear as endpoints of the above mentioned intervals ($b$ is referred to as the birth, and $d$ as the death of an interval). If we want to indicate whether an interval is open or closed at a specific endpoint we may use decorated points as in \cite{Cha1}. To each point we attach a degree indicating the number of intervals with the corresponding endpoints. For a simpler statement of the stability result below we add to each $PD$ the diagonal $\{(x,x)\mid x>0\}$ in the form of an uncountable collection of points of infinite multiplicity. The \textbf{bottleneck distance} $d_B(\{A_r\}_{r>0}, \{B_r\}_{r>0})$ between persistence modules $\{A_r\}_{r>0}$ and $\{B_r\}_{r>0}$ is the minimal number $M$, for which there exists a perfect $M$-\textbf{matching} between the corresponding persistence diagrams, i.e., a bijection $\phi$ between points (where a point of multiplicity $n$ is considered to consist of $n$-copies of a single point) of persistence diagrams so that for each point $A$ we have $d_\infty((a_1, a_2), \phi(a_1, a_2))\leq M$, where $d_\infty ((x_1, y_2), (x_2, y_2))=\max \{|x_1-x_2|, |y_1-y_2|\}$.

Given $\delta>0$, a $\delta$-\textbf{interleaving} between persistences $\{A_r\}_{r>0}$ and $\{B_r\}_{r>0}$ is a collection of homomorphisms $f_r \colon A_r \to B_{r+\delta}$ and $g_r \colon B_r \to A_{r+\delta}$ which commutes with each other and with the corresponding bonding maps. The celebrated Stability Theorem  \cite{Stab1, Stab2} states the following: if persistence modules $\{A_r\}_{r>0}$ and $\{B_r\}_{r>0}$ are $\delta$-interleaved then $d_B(\{A_r\}_{r>0}, \{B_r\}_{r>0})\leq\delta$.
 \end{Def}

\begin{Def}
 \label{DefSLSC}
A path-connected space $X$ is \textbf{semi-locally simply-connected} (SLSC) if for every point $x\in X$ there exists a neighborhood $U$ of $x$ for which the image of the inclusion induced map $\pi_1(U,x)\to \pi_1(X,x)$ is trivial.

Given an Abelian group $G$, a path-connected space $X$ is $G$-\textbf{semi-locally simply-connected} ($G$-SLSC) if for every point $x\in X$ there exists a neighborhood $U$ of $x$ for which the image of the inclusion induced map $H_1(U; G)\to H_1(X; G)$ is trivial.
\end{Def}

Throughout the paper we will usually prove results for $\pi_1$-persistence. The homological analogues can be deduced in a similar manner or directly using the Hurewicz Theorem and the Universal Coefficients Theorem.

Next we present an extensive definition and several results results of \cite{ZV} that will be useful in for arguments. 

\begin{Def}\cite[excerpt from Definition 3.1]{ZV}
\label{DefRLoop}

 Fixing $r>0$ we define the following notation:
 \begin{enumerate}
 	\item $r$-\textbf{loop} $L$: a simplicial loop in $\Rips(X,r)$ considered as a sequence of points $(x_0, x_1, \ldots, x_k, x_{k+1}=x_0)$ in $X$ with $d(x_i, x_{i+1})<r, \forall i\in \{0,1,\ldots, k\}$. 
	\item \textbf{filling} of $L$: any loop in $X$ obtained from $L$ by connecting $x_i$ to $x_{i+1}$ by a geodesic for all $i\in \{0,1,\ldots, k\}$;
	\item $\size(L)=|L|=k+1$;
	\item $r$-\textbf{sample} of a loop $\alpha \colon [0,a]\to X$: a choice of $0\leq t_0<t_1<\ldots < t_m \leq a$ with $\diam \alpha ([t_{i},t_{i+1}])<r, \forall i\in \{0,1,\ldots, m-1\}$ and $\diam (\alpha([0,t_0] \cup[t_m,a]) <r$. Such choice of $t_i$'s exists by compactness. By an $r$-sample we will usually consider the induced $r$-loop $(\alpha(t_0), \alpha(t_1), \ldots, \alpha(t_m), \alpha(t_{0}))$. If $\alpha$ is based at $\bullet$, we will assume $t_0=0$;
 \end{enumerate}
 
 An $r$-loop is $r$-\textbf{null} if it is contractible in $\Rips(X,r)$. Two $r$-loops are $r$-\textbf{homotopic}, if they are homotopic in $\Rips(X,r)$. The corresponding simplicial homotopy in $\Rips(X,r)$ is referred to as $r$-\textbf{homotopy}. If the second $r$-loop is constant we also call it $r$-\textbf{nullhomotopy}. Depending on the context we may be considering based or unbased homotopies. The \textbf{concatenation} $L * L'$ of $r$-loops $L$ and $L'$ is defined in the obvious way by joining (concatenating) the defining sequences. Note that a filling of the join is the join of fillings of $r$-loops.
\end{Def}

\begin{Prop}\cite[Proposition 3.2]{ZV}
\label{PropRips}
 Let $X$ be geodesic and fix $0<r<r'$. Then the following hold:
 \begin{enumerate}
 	\item if $L$ is an $r$-loop then it is an $r'$-loop as well;
	\item if $r$-loop $L$ is $r$-null then it is $r'$-null as well;
	\item any $r$-loop of size $3$ is $r$-null;
	\item given a loop $\alpha \colon [0,a]\to X$, any two $r$-samples of $\alpha$ are $r$-homotopic;
	\item any $r$-sample of a loop of length less than $3r$ is $r$-null;
	\item choose loops $\alpha \colon [0,a]\to X$ and $\alpha' \colon [0,a']\to X$ and take any two of their $r$-samples $L$ and $L'$. If $\alpha$ and $\alpha'$ are  homotopic, then  $L$ and $L'$ are $r$-homotopic (the statement holds for both based and unbased versions). If $G$ is an Abelian group and $[\alpha]_G=[\alpha']_G\in H_1(X; G)$ then $[L]_G=[L']_G\in H_1(\Rips(X,r);G)$; 
	\item if a loop $\alpha \colon [0,a]\to X$, is contractible, then any of its $r$-samples is $r$-null;
	\item suppose two $r$-loops, $L$ and $L'$, are given by $(x_0, x_1, \ldots, x_k, x_{k+1}=x_0)$ and $(y_0, y_1, \ldots, y_k, y_{k+1}=y_0)$. If
	$$
	\max_{i\in \{0,1,\ldots, k\}}d(x_i,y_i) < r- \max_{i\in \{0,1,\ldots, k\}} \{d(x_i,x_{i+1}),d(y_i, y_{i+1})\},
	$$
	then $L$ and $L'$ are $r$-homotopic (the statement holds for both based and unbased versions);
	\item maps $\pi_1(i_{p,q})\colon \pi_1(\Rips(X,p),\bullet)\to \pi_1(\Rips(X,q), \bullet)$ and their homological counterpart are surjective for all $p<q$.
\end{enumerate}
\end{Prop}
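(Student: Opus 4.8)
The plan is to prove the nine items essentially in the listed order, since the later ones reduce to the earlier. Items (1) and (2) are immediate monotonicity statements: because $\diam(\sigma)<r$ implies $\diam(\sigma)<r'$, the complex $\Rips(X,r)$ is a subcomplex of $\Rips(X,r')$ and $i_{r,r'}$ is the corresponding inclusion, so it carries an $r$-loop to the same $r'$-loop and an $r$-nullhomotopy to an $r'$-nullhomotopy. For (3), an $r$-loop $(x_0,x_1,x_2,x_0)$ has all three pairwise distances $<r$, so $\{x_0,x_1,x_2\}$ spans a triangle of $\Rips(X,r)$ whose boundary is $L$; hence $L$ is $r$-null.

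For (4) I would first prove the one-step refinement lemma: inserting a parameter $t'$ between consecutive sample times $t_i<t_{i+1}$ produces an $r$-homotopic loop, because $\alpha(t_i),\alpha(t'),\alpha(t_{i+1})$ all lie in $\alpha([t_i,t_{i+1}])$, a set of diameter $<r$, hence span a triangle across which the edge $\alpha(t_i)\alpha(t_{i+1})$ is pushed by an elementary simplicial homotopy. Two arbitrary $r$-samples then both reduce, by repeated insertion, to the common refinement obtained by merging their parameter sets, and transitivity gives the claim. Item (5) follows by dividing a loop of length $<3r$ into three subarcs of equal length $<r$; each subarc has diameter at most its length, so the three division points give a size-three $r$-sample, which is $r$-null by (3), and every $r$-sample is $r$-homotopic to it by (4).

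The combinatorial engine is (8), which I would deduce from simplicial contiguity. Viewing $L$ and $L'$ as simplicial maps of the cyclically triangulated circle on $k+1$ vertices, the hypothesis forces, for each edge $\{i,i+1\}$, that $x_i,x_{i+1},y_i,y_{i+1}$ have diameter $<r$: writing $M=\max\{d(x_i,x_{i+1}),d(y_i,y_{i+1})\}$, each of the six distances is at most $M+\max_j d(x_j,y_j)<r$ by the triangle inequality. Thus the two maps are contiguous and hence homotopic, rel the basepoint vertex when $x_0=y_0=\bullet$. Item (6) is where the real work lies and is the main obstacle. For the $\pi_1$ version I would fix a homotopy $H$ between $\alpha$ and $\alpha'$, use uniform continuity to pick a grid all of whose cells have image of diameter $<r/3$, form the row $r$-loops $L_j$, note that $L_0$ and $L_m$ are $r$-samples of $\alpha$ and $\alpha'$, and apply (8) to each consecutive pair (the $r/3$ bound makes the strict inequality in (8) hold); (4) then connects the given samples to $L_0$ and $L_m$. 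The homological statement is the delicate point: one writes $[\alpha]_G-[\alpha']_G=\partial c$ for a singular $2$-chain $c$, barycentrically subdivides the singular simplices of $c$ until each small face has image of diameter $<r$, and samples vertices to produce a simplicial $2$-chain in $\Rips(X,r)$ whose boundary witnesses $[L]_G=[L']_G$; lining up this boundary with the chosen $r$-samples is the bookkeeping to handle with care.

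Finally, (7) is the special case $\alpha'\equiv\bullet$ of (6), whose $r$-sample is the trivial loop, so any $r$-sample of a contractible $\alpha$ is $r$-null. For (9), represent a class of $\pi_1(\Rips(X,q),\bullet)$ by a $q$-loop $L$ and pass to a filling $\beta$, a genuine loop in $X$: then $L$ is a $q$-sample of $\beta$ (each geodesic arc has diameter $d(x_i,x_{i+1})<q$), while a based $p$-sample $L'$ of $\beta$ is also a $q$-sample, so (4) makes $L$ and $i_{p,q}(L')$ $q$-homotopic and exhibits $[L]$ in the image of $\pi_1(i_{p,q})$; the homological counterpart is identical.
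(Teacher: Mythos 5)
This proposition is imported from \cite[Proposition 3.2]{ZV}; the present paper gives no proof of it, so there is nothing internal to compare against, but your argument matches the standard techniques that \cite{ZV} uses and that this paper itself echoes (the square-split-into-two-triangles contiguity homotopy of Figure \ref{FigPropMapsFinApprox1} is exactly your proof of (8), and the filling-plus-resampling argument in Proposition \ref{PropMapsFinApprox1} is your proof of (9)). Your outline is correct: (1)--(5), (7), (8) and (9) are complete as written, and the reduction of (6) to (8) via a grid of mesh $r/3$ is the right mechanism. The only place you are genuinely sketching rather than proving is the homological half of (6) with arbitrary Abelian coefficients, where the subdivision-and-sampling of a singular $2$-chain and the identification of its sampled boundary with the chosen $r$-samples would need to be written out.
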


\begin{Theorem} 
\cite[excerpt from Theorem 7.1]{ZV}\label{ThmFin}
 Suppose $X$ is compact and geodesic,  and let $q$ be a critical value for $\pi_1$ persistence of $X$ via open Rips filtration.
 \begin{description}
 \item [a] For each $r>0$ there exist only finitely many critical values for $\pi_1$ persistence via open Rips complexes, which are greater than $r$. 
\item [b] Let $q<p$ be a pair of consecutive critical values. Group $\L(X, 3p, \pi_1)$ is generated by  $\L(X, 3q, \pi_1)$ and a collection of geodesic $3q$-lassos.
 
\end{description}
 \end{Theorem}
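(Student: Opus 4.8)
The plan is to convert both claims into statements about the increasing family of normal subgroups $N_r:=\L(X,3r,\pi_1)$ of $\pi_1(X,\bullet)$, using the identification $\pi_1(\Rips(X,r),\bullet)\cong \pi_1(X,\bullet)/N_r$ that underlies this persistence for geodesic $X$. The bridge to geometry is Proposition \ref{PropRips}(5): a loop of length less than $3r$ has $r$-null $r$-samples, so a geodesic circle of length $3c$ contributes to $N_r$ exactly for $r>c$. Since the bonding maps are surjective (Proposition \ref{PropRips}(9)), every critical value is a ``death'', and it occurs precisely where $r\mapsto N_r$ jumps. I would first record two reductions. No critical value exceeds $D:=\diam(X)$, because for $r>D$ the complex $\Rips(X,r)$ is a full simplex and its $\pi_1$ is trivial. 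Moreover $r\mapsto N_r$ is left-continuous, since $N_r$ is generated by lassos whose circle has length strictly below $3r$, whence $N_r=\bigcup_{r'<r}N_{r'}$.

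For part a, assume for contradiction that infinitely many critical values lie in $(r,D]$ and pass to a monotone subsequence $p_i\to c\in[r,D]$. To each $p_i$ I would attach a geodesic circle $C_i$ of length $3p_i$ witnessing the death at $p_i$: its existence comes from minimizing length over the class that dies at $p_i$ and invoking the Arzel\`a--Ascoli theorem (the competing loops, parametrized by arc length, are uniformly Lipschitz into the compact space $X$), and the minimizer is a geodesic circle by a shortening argument. As $\length(C_i)=3p_i\le 3D$ is bounded, Arzel\`a--Ascoli again extracts a subsequence with $C_i\to C_\infty$ uniformly. Fixing $i<j$ large and a parameter $s$ with $p_i<s<p_j$, the $s$-sample of $C_i$ is $s$-null (its length $3p_i<3s$), while that of $C_j$ is not (by minimality of $C_j$, below its critical value); yet $C_i$ and $C_j$ are uniformly close, so Proposition \ref{PropRips}(8) makes their $s$-samples $s$-homotopic, a contradiction.

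For part b I would use that $q<p$ are consecutive: by left-continuity and the absence of critical values in $(q,p)$, the map $r\mapsto N_r$ is constant on $(q,p)$ and $N_p=N_s$ for $s\in(q,p)$, while $N_q\subsetneq N_p$. Hence the generators of $N_p=\L(X,3p,\pi_1)$ not already in $N_q=\L(X,3q,\pi_1)$ are exactly the lassos $\alpha*\beta*\alpha^-$ with $3q\le\length(\beta)<3p$. The remaining task is to show each such generator equals, modulo $N_q$, a geodesic lasso whose circle has length exactly $3q$. I would again shorten $\beta$ by length-minimization: the minimizer is a geodesic circle, its length is forced to be a critical length, and the only critical length in $[3q,3p)$ is $3q$ because no critical value lies strictly between $q$ and $p$. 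The resulting geodesic circles of length $3q$ are the asserted geodesic $3q$-lassos.

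The main obstacle in both parts is the same: pinning down the correspondence between a purely persistence-theoretic event (a jump of $N_r$, i.e.\ a class that genuinely dies at scale $c$) and a concrete length-minimizing geodesic circle of length $3c$ realizing it, and checking that the shortening procedure changes the lasso only by elements of $N_q$ (so that one stays in the correct coset). Establishing this via Arzel\`a--Ascoli together with a first-variation/shortening argument is the technical core; granting it, the finiteness of part a and the generation statement of part b follow as above.
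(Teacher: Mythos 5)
The paper offers no proof of this statement---it is quoted from \cite[Theorem 7.1]{ZV}---so there is no in-paper argument to compare against; I can only judge your sketch on its own terms. You correctly identify the strategy behind the cited result (critical values are realized by geodesic circles of three times their length, produced by length minimization and Arzel\`a--Ascoli, and both (a) and (b) reduce to this correspondence), but the step you label ``the main obstacle \dots granting it'' is not a deferrable verification: it is essentially the entire content of the theorem. Concretely: ``minimizing length over the class that dies at $p_i$'' is not well posed as written. A class can die at scale $s$ because it falls into $\L(X,3s,\pi_1)$, i.e.\ becomes a product of conjugates of loops each of length less than $3s$, while every single loop representing it has length $\geq 3s$; so minimizing over a free homotopy class does not locate the death time. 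The correct object to minimize over is a coset of $\L(X,3s,\pi_1)$, and for that both the attainment of the infimum and the fact that a minimizer is an isometrically embedded circle (not merely a locally length-minimizing closed loop) require the shortcut-decomposition machinery of \cite{ZV}. Note also that a first-variation argument is unavailable: $X$ is only a compact geodesic space with no smooth structure, so shortening must proceed by replacing an arc with a geodesic chord, which splits one lasso into a product of two and forces exactly the coset bookkeeping you postpone. In part (b) the same issue recurs in sharper form: a single generator need not reduce modulo $\L(X,3q,\pi_1)$ to a \emph{single} geodesic lasso, the shortcut process yields products of lassos with strictly shorter circles, and one must prove it terminates rather than having lengths accumulate somewhere in $(3q,3p)$; your appeal to ``the only critical length in $[3q,3p)$ is $3q$'' presupposes the very correspondence between critical values and geodesic-circle lengths that is at stake.

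A secondary concern is circularity. Your opening identification $\pi_1(\Rips(X,r),\bullet)\cong\pi_1(X,\bullet)/\L(X,3r,\pi_1)$ holds only for SLSC spaces; for general compact geodesic $X$ the fundamental group of the Rips complex is a quotient of $\L(X,fin,\pi_1)$ (cf.\ Theorem \ref{ThmPerBasis}), and that structure theorem is established in \cite{ZV} \emph{after} the statement you are proving, so it cannot be assumed here without checking the order of dependence. The genuinely self-contained parts of your argument---boundedness of critical values by $\diam(X)$, left-continuity of $r\mapsto\L(X,3r,\pi_1)$, and the contradiction via Proposition \ref{PropRips}(5) and (8) once uniformly convergent witnessing circles are in hand---are correct and would indeed finish part (a), but they rest entirely on the unproven core.
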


\begin{Theorem}
\cite[Theorem 7.8]{ZV}.
\label{ThmPerBasis}
 [Persistence-basis correspondence] 
 Suppose $X$ is a compact geodesic space. Then there exist geodesic lassos $\{\beta_i=\alpha_i * \gamma_i * \alpha_i^-\}_{i\in J}$, with $\gamma_i$ being a geodesic circle of length $l_i$, whose normal closure is $\pi_1(X, \bullet)$, so that the following isomorphism holds: 
 $$\big\{\pi_1(\Rips(X,r),\bullet)\big\}_{r>0}\cong\big\{\L(X,fin, \pi_1)/
\{\beta_i\big\}_{l_i<r}
\}_{r>0},$$
with bonding maps of the right-side persistence being the natural quotient maps. The set of critical values coincides with $\{l_i/3\}_{i\in J}$. For each critical value $c$ there exist only finitely many indices $j$ for which $l_j=3c$. 

Furthermore, if $X$ is SLSC then $J$ can be chosen to be finite and
 $$\big\{\pi_1(\Rips(X,r),\bullet)\big\}_{r>0}\cong\big\{\pi_1(X, \bullet)/
\{\beta_i\big\}_{l_i<r}
\}_{r>0}.$$
\end{Theorem}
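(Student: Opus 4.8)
The plan is to realise the right-hand persistence as the image of the left-hand group under a single scale-compatible ``sampling'' homomorphism, and then to identify its kernel critical value by critical value. First I would build the comparison map: for each $r$ send a based loop $\alpha$ to the $r$-homotopy class of one of its $r$-samples. By Proposition \ref{PropRips}(4) and (6) this is independent both of the chosen sample and of the representative of $[\alpha]_{\pi_1}$, and compatibility with concatenation of $r$-loops (Definition \ref{DefRLoop}) makes it a homomorphism $\Phi_r \colon \pi_1(X,\bullet)\to\pi_1(\Rips(X,r),\bullet)$. Since $i_{p,q}$ is the identity on vertices, a $p$-sample is already a $q$-loop that agrees with any $q$-sample up to $q$-homotopy (Proposition \ref{PropRips}(4)), so the $\Phi_r$ commute with the bonding maps. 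Surjectivity is the easy half and does not involve $\L$: given an $r$-loop $L=(\bullet,x_1,\dots,x_k,\bullet)$, a filling $\alpha$ has consecutive vertices at distance $<r$, so $L$ is itself a valid $r$-sample of $\alpha$ and $\Phi_r([\alpha])=[L]$; as $\alpha$ is a based loop of finite length, $[\alpha]_{\pi_1}\in\L(X,fin,\pi_1)$ (the lasso with trivial tail), so already $\Phi_r|_{\L(X,fin,\pi_1)}$ is onto.

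Next I would check that the relations die at the correct scale. Sampling a geodesic lasso $\beta_i=\alpha_i*\gamma_i*\alpha_i^-$ block by block, its middle part is an $r$-sample of the circle $\gamma_i$ of length $l_i$; by Proposition \ref{PropRips}(5) this is $r$-null once $\gamma_i$ has length below $3r$, and then Proposition \ref{PropRips}(7) collapses the two tails, so $\Phi_r(\beta_i)=0$ exactly once $\gamma_i$ is $r$-null, i.e.\ for $r>l_i/3$. This is the point at which the critical value $l_i/3$ is produced, and it lets $\Phi_r$ factor through the quotient of $\L(X,fin,\pi_1)$ by the normal closure of the lassos whose circles are already $r$-null, yielding a natural surjection $\overline\Phi_r$ out of the right-hand group.

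The heart of the argument is the reverse inclusion: these circle relations already exhaust $\ker(\Phi_r|_{\L(X,fin,\pi_1)})$. I would prove this by induction on the critical values, which by Theorem \ref{ThmFin}(a) are finite in number above any positive threshold and so may be crossed one at a time (with the triviality of the intersection $\bigcap_{r}\ker(\Phi_r|_{\L})$ as the limiting base case). The inductive step is supplied by Theorem \ref{ThmFin}(b): crossing a critical value $c$ enlarges the relevant subgroup by adjoining finitely many geodesic circle lassos, and I would take precisely these as the batch $\{\beta_i : l_i=3c\}$, which also gives the asserted finiteness per critical value. The geometric content that must be fed in is that an $r$-nullhomotopy of an $r$-sample displays its filling as a product of conjugates of geodesic triangle loops of perimeter $<3r$, and that each such short loop lies in the normal closure of the geodesic circle lassos of circumference $<3r$. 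Proving this reduction --- that short geodesic triangles are normally generated by short geodesic circles, uniformly in the scale --- is the main obstacle; it is where compactness and the structure theory of \cite{ZV} behind Theorem \ref{ThmFin}(b) enter, and it is also the reason one must restrict to $\L(X,fin,\pi_1)$: on all of $\pi_1(X,\bullet)$ the kernel additionally contains the scale-invisible classes of $\bigcap_{r}\ker\Phi_r$, which no finite set of relations per critical value can present, whereas passing to $\L(X,fin,\pi_1)$ removes them.

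Finally I would assemble the statement. With $\ker(\Phi_r|_{\L})$ identified as the normal closure of the killed circles for the fixed, $r$-independent family $\{\beta_i\}$ accumulated over all critical values, $\overline\Phi_r$ becomes an isomorphism commuting with the quotient bonding maps, which is the displayed persistence isomorphism; the critical-value set $\{l_i/3\}$ and the per-value finiteness are immediate from the inductive step. For normal generation, letting $r\to\infty$ makes $\Rips(X,r)$ a simplex with trivial $\pi_1$, so $\L(X,fin,\pi_1)$ is the normal closure of all the $\beta_i$; together with the compactness fact that $\L(X,fin,\pi_1)$ normally generates $\pi_1(X,\bullet)$, this shows the $\beta_i$ normally generate $\pi_1(X,\bullet)$. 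In the SLSC case a Lebesgue-number argument makes every sufficiently small loop null in $X$, which on one hand upgrades $\L(X,fin,\pi_1)$ to all of $\pi_1(X,\bullet)$ and on the other bounds the circumferences of non-contractible geodesic circles away from $0$; with Theorem \ref{ThmFin}(a) this forces finitely many critical values and a finite $J$, yielding the second displayed form with $\pi_1(X,\bullet)$ in place of $\L(X,fin,\pi_1)$.
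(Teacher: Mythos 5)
First, a point of order: this paper does not prove Theorem \ref{ThmPerBasis}. It is quoted verbatim from \cite[Theorem 7.8]{ZV} and used as a black box; the only in-paper information about its proof is the remark following the statement identifying the maps realizing the isomorphism (fillings in one direction, $r$-samples in the other). So there is no in-paper argument to compare yours against. Judged against what the paper does make available, your skeleton is the right one: your $\Phi_r$ and its inverse are exactly the maps the paper names; your computation that a geodesic lasso $\beta_i$ dies precisely when $r>l_i/3$ is what makes the asserted critical set $\{l_i/3\}$ come out (and correctly forces the subscript in the displayed quotient to be read as $l_i<3r$); and organizing the kernel identification as an induction over critical values driven by Theorem \ref{ThmFin}\textbf{b} is consistent with the quoted structure theory.

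As a proof, however, two steps are genuinely missing rather than merely deferred. (a) The chain $\ker(\Phi_r|_{\L(X,fin,\pi_1)})=\L(X,3r,\pi_1)=\langle\langle \beta_i \mid l_i<3r\rangle\rangle$ rests on the claims that an $r$-nullhomotopy exhibits a filling as a product of conjugates of loops of length $<3r$, and that all such short loops are normally generated by short \emph{geodesic circle} lassos (which requires producing geodesic representatives via compactness). You correctly identify this as the main obstacle, but it is precisely the content of \cite[Theorem 7.8]{ZV} and is not derivable from the two excerpts quoted in this paper, so attributing it to ``the structure theory behind Theorem \ref{ThmFin}(b)'' leaves the theorem essentially assumed. (b) Your induction crosses only the finitely many critical values above a threshold; when $X$ is not SLSC they accumulate at $0$, and the ``limiting base case'' $\bigcap_r\ker(\Phi_r|_{\L})=\{1\}$ is neither proved nor sufficient --- what is needed is the equality of $\ker(\Phi_r|_{\L})$ with the normal closure at \emph{every fixed} $r$, which requires an argument at the bottom of the filtration (for instance, that any single element of $\L(X,fin,\pi_1)$ is a finite product of lassos and therefore only finitely many critical values are ever relevant to it). A smaller omission: for the critical set to be \emph{exactly} $\{l_i/3\}$ you must also show each batch produced at a critical value is nontrivial modulo the earlier relations; you assert this (``exactly once $\gamma_i$ is $r$-null'') without argument.
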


It will also be handy to know what maps induce the isomorphisms in Theorem \ref{ThmPerBasis}. The isomorphism  $\pi_1(\Rips(X,r),\bullet) \to \L(X,fin, \pi_1)/
\{\beta_i\big\}_{l_i<r}$ is obtained by taking any filling of an $r$-loop in $\Rips(X,r)$. In the other direction we take any $r$-sample. For details see \cite{ZV}.

The following is a generalization of Proposition \ref{PropRips}(5). For reasons of simplicity it is stated for geodesic circles $\alpha$ but actually holds for general loops of finite length.

\begin{Proposition} \label{PropSNull}
Suppose $S\subset X$ is  $s$-dense, $\gamma$ is a geodesic circle in $X$ of  length $a=\length(\gamma)$, and $r, \delta>0$ are positive reals with $ r > \delta + 2s$. 
Let $L$ be a $\delta$-sample of $\gamma$ given by $(x_0, x_1, \ldots, x_k, x_{k+1}=x_0)$. Also choose an $r$-loop $L_S$ in $X$ given by $(y_0, y_1, \ldots, y_k, y_{k+1}=y_0)$ where $d(x_i, y_i)<s$ and  $y_i\in S, \forall i$. Note that $L_S$ also represents a simplicial loop in $\Rips(S, r)$. Then the following hold:
\begin{enumerate}
 \item if $a< 3(r-2s)$, then $L_S$ is contractible in $\Rips(S, r)$;
 \item if $a< 3r$ and $S$ contains three equidistant points on $\alpha$, then $L_S$ is contractible in $\Rips(S, r)$.
\end{enumerate}
\end{Proposition}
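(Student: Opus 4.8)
The plan is to contract $L_S$ inside $\Rips(S,r)$ onto the ``central triangle'' spanned by three equidistant points of $\gamma$. First I would check that $L_S$ is genuinely a loop of $\Rips(S,r)$: by the triangle inequality $d(y_i,y_{i+1})\le d(y_i,x_i)+d(x_i,x_{i+1})+d(x_{i+1},y_{i+1})<s+\delta+s=\delta+2s<r$, so consecutive vertices span edges. Parametrizing $\gamma$ by arc length, pick the three points cutting it into arcs $\sigma_0,\sigma_1,\sigma_2$ of length $a/3$, with common endpoints $q_0,q_1,q_2$. In either case these points (or $s$-close substitutes) will lie in $S$ and span a triangle of $\Rips(S,r)$, and the whole argument reduces to showing that the portion of $L_S$ running alongside each arc can be pushed, rel its endpoints, onto the corresponding edge of that triangle.

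For (1), choose $p_j\in S$ with $d(q_j,p_j)<s$. Since $a/3<r-2s$, each arc has diameter $a/3$, so for sample points $x_i,x_{i'}\in\sigma_j$ one gets $d(y_i,y_{i'})\le a/3+2s<r$, and likewise $d(p_j,y_i)<a/3+2s<r$ and $d(p_j,p_{j+1})<a/3+2s<r$. Hence $V_j:=\{p_j,p_{j+1}\}\cup\{y_i:x_i\in\sigma_j\}$ has diameter $<r$ and is a single simplex of $\Rips(S,r)$. I would insert each $p_j$ into $L_S$ between the two consecutive vertices whose base points straddle $q_j$ (a legal elementary homotopy, since the three vertices involved lie within $\delta+2s<r$ and span a triangle), splitting $L_S$ into three arc-paths with endpoints $p_j,p_{j+1}$. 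Each such path lives in one simplex $V_j$, so deleting its interior vertices one by one collapses it, rel endpoints, to the edge $p_jp_{j+1}$. What remains is the size-$3$ loop $(p_0,p_1,p_2,p_0)$, which is $r$-null by Proposition \ref{PropRips}(3).

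For (2) the three equidistant points $p_0,p_1,p_2$ lie \emph{exactly} on $\gamma$ and in $S$, so $d(p_j,p_{j+1})=a/3<r$ and they span a genuine triangle of $\Rips(S,r)$; moreover $r>\delta+2s>2s$ gives $s<r/2$, whence the crucial inequality $a/6+s<r/2+r/2=r$. The obstruction is that, with only $a<3r$, the sets $V_j$ may have diameter $a/3+2s\ge r$, so the simplex argument of (1) --- and even Proposition \ref{PropRips}(8), whose required bound $a/2+s<r$ need not hold --- breaks down. To circumvent this I would first refine: inserting points of $S$ that are $s$-close to intermediate points of $\gamma$ is again a legal homotopy (the vertices involved are within $\delta+2s<r$), so I may assume the base points $x_i$ have arbitrarily small consecutive arc-gaps $\delta'$. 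Letting $\nu(i)\in\{0,1,2\}$ index the equidistant point nearest $x_i$, one has $d(x_i,p_{\nu(i)})\le a/6$ and $d(y_i,p_{\nu(i)})\le a/6+s<r$. For consecutive indices the four points $y_i,y_{i+1},p_{\nu(i)},p_{\nu(i+1)}$ have all pairwise distances $<r$; the only delicate ones are the cross distances $d(y_i,p_{\nu(i+1)})\le a/6+\delta'+s$, which drop below $r$ once $\delta'<r-a/6-s$. These four points then span a simplex of $\Rips(S,r)$, so the quadrilateral between the edges $y_iy_{i+1}$ and $p_{\nu(i)}p_{\nu(i+1)}$ is filled. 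Gluing these filled quadrilaterals along the shared edges $y_{i+1}p_{\nu(i+1)}$ yields a homotopy in $\Rips(S,r)$ from the refined $L_S$ to the loop $M=(p_{\nu(0)},p_{\nu(1)},\dots,p_{\nu(0)})$. Since every vertex of $M$ is one of $p_0,p_1,p_2$ and consecutive vertices are equal or adjacent, $M$ lies in the single triangle $\{p_0,p_1,p_2\}$ and is $r$-null.

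The routine part throughout is verifying the triangle inequalities; the genuine difficulty is case (2), where the $2s$ of slack between $S$ and $\gamma$ pushes the naive ``one simplex per arc'' sets out of range precisely when $a$ approaches $3r$. The two ideas resolving this are that the three equidistant points sit exactly on $\gamma$ (keeping the triangle $p_0p_1p_2$ and the coning distances $a/6+s$ below $r$), and that refining the sample forces the cross distances $a/6+\delta'+s$ below $r$ as well, so that $L_S$ slides onto the central triangle through honestly filled tetrahedra rather than through the weaker interleaving of Proposition \ref{PropRips}(8). I would still double-check the cyclic bookkeeping --- how the arc decomposition interacts with the basepoint and with arcs containing no sample point --- but expect no essential difficulty there.
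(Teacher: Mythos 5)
Your proof is correct and follows essentially the same strategy as the paper's: contract $L_S$ onto the central triangle spanned by three (near-)equidistant points of $\gamma$, assigning each vertex $y_i$ to the point nearest $x_i$. The paper fills the intervening annulus with a fan of triangles $y_i,y_{i+1},q_{j_i}$ plus three transition triangles (so it needs neither your single-simplex-per-arc observation in (1) nor the refinement step in (2)), but the distance estimates and the overall structure are the same.
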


\begin{proof}
(1) Pick three equidistant points $w_1, w_2, w_3$ on $\gamma$. For each of them choose some corresponding point in $S$, i.e., for each $i$ choose $q_i\in S, d(w_i, q_i)<s$. Note that $d(q_i, q_j)< a/3 + 2s < r$, hence $q_1, q_2, q_3$ form a simplex (the shaded triangle on Figure \ref{FigNullhomotopy}) in $\Rips(S, r)$. Furthermore, we add the following triangles by the following procedure:
\begin{itemize}
\item choose a vertex $y_{i_3}$ of $L_S$ closest to the midpoint of $w_1$ and $w_2$ along $\gamma$. If two points satisfy this condition choose any of them. Add triangle $y_{i_3}, q_1, q_2$ and note that $d(q_1, q_2) <r$, hence the triangle is contained in $\Rips(S, r)$. Repeat the procedure for the other two combinations of indices to obtain $y_{i_1}$ and $y_{i_2}$ and two more triangles contained in $\Rips(S, r)$. On Figure \ref{FigNullhomotopy} these appear as the non-shaded triangles, sharing a side with the shaded triangle.
 \item for each $i\in \{0, 1, \ldots, k\}$ define $j_i\in \{1, 2, 3\}$ as $j_i = \argmin_j d(w_j, x_i)$. If two indices $j_i$ could be chosen then $x_i$ is the midpoint between two points of the form $w_*$ and we choose $w_{j_i}$ to be any of them. Then add:	
		\begin{itemize}
 			\item for each pair $i, i+1$ with $j_i = j_{i+1}$ the triangle $y_i, y_{i+1}, q_{j_i}$;
			\item for each $j\in \{1, 2, 3\}$ triangles $y_{i_j}, y_{i_j+1}, q_{i_j+1}$ and $y_{i_j}, y_{i_j-1}, q_{i_j-1}$ (for each $j$ precisely one of these two has been added in the previous step), where we consider indices modulo $k$.
		\end{itemize}
 \end{itemize}
 These triangles together form a simplicial nullhomotopy of $L_S$ in $\Rips(S, r)$ as sketched on Figure \ref{FigNullhomotopy}.
 
\begin{figure}
\begin{tikzpicture}[scale=1]

\foreach \x  in {0, 1, ..., 11}
	\draw (-\x * 30+90:1.9) -- (-\x * 30+90:2.0) (-\x * 30+90:2.4) node {$y_{\x }$};
\foreach \x  in {0, 1, ..., 11}
	\draw [fill=black] (-\x * 30+90:1.9) circle (.1cm);
\foreach \x  in {0, 1, ..., 11}
	\draw (-\x * 30+90:1.9) -- (-\x * 30+120:1.9);
	\draw[top color=gray,bottom color=white, fill opacity =1] (1,0) -- (-.5, .7) -- (-.5,-.7) -- cycle;
\node (q1) at (1,0){$\bullet$}; 

\node (q2) at (-.5,.7){$\bullet$};

\node (q3) at (-.5,-.7){$\bullet$};

\foreach \x  in { 2, 3,..., 6}
	\draw (q1) -- (-\x * 30+120:1.9);
\foreach \x  in { 6,7, ..., 10}
	\draw (q3) -- (-\x * 30+120:1.9);
\foreach \x  in {10, 11, 12, 1, 2}
	\draw (q2) -- (-\x * 30+120:1.9);
	\node at (-.3, -.4){$q_3$};
	\node at (-.3, .4){$q_2$};
	\node at (.8, .3){$q_1$};

\end{tikzpicture}
\caption{Sketch of proof of Proposition \ref{PropSNull} (1).}
\label{FigNullhomotopy}
\end{figure}
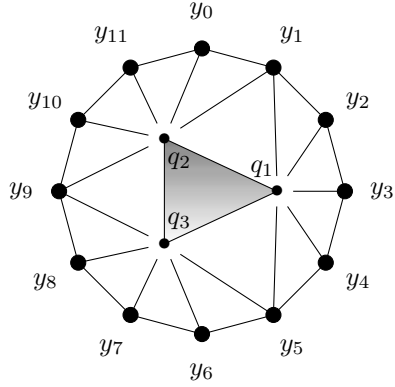
 
(2) Follows along the same lines as (1) using $w_i=g_1, \forall i\in \{1, 2, 3\}$.
\end{proof}

\section{The interleaving}

\begin{Definition}\label{DefMapsFinApprox}
Suppose $X$ is a  geodesic space, $G$ is a Abelian group,  $S\subset X$ is an $s$-dense subset containing $\bullet$ and $0<p<q$. The metric we use on $S$ is the restriction of the (geodesic) metric on $X$. The maps
$$
i^{\pi_1, S}_{p,q}\colon \pi_1(\Rips(S, p), \bullet) \to \pi_1(\Rips(S, q), \bullet), 
$$
$$
 i^{G, S}_{p,q}\colon H_1(\Rips(S, p); G) \to H_1(\Rips(S, q); G),
$$
$$
\mu^{\pi_1, S}_p \colon \pi_1(\Rips(S, p), \bullet) \to \pi_1(\Rips(X, p), \bullet),
$$ 
$$
\mu^{G, S}_p \colon H_1(\Rips(S, p); G) \to H_1(\Rips(X, p); G)
$$ 
are induced by the corresponding inclusions $\Rips(S, p)\hookrightarrow \Rips(S, q)$ and $S \hookrightarrow X$.
Maps
$$
\nu^{\pi_1, S}_p \colon \pi_1(\Rips(X, p), \bullet) \to \pi_1(\Rips(S, p+2s), \bullet),
$$ 
$$
\nu^{G, S}_p \colon H_1(\Rips(X, p); G) \to H_1(\Rips(S, p+2s); G)
$$ 
are defined by the following procedure: 
\begin{enumerate}
 \item choose a based $p$-loop $L$, given by  $\bullet = x_0, x_1, \ldots, x_k, x_{k+1}=\bullet$, representing an element of $\pi_1(\Rips(X, p), \bullet)$ or of $H_1(\Rips(X, p); G)$;
 \item for each $i$ choose $y_i\in S$ so that $d(x_i, y_i)<s$ and $y_0=y_{k+1}=\bullet$. Note that in this case we have $d(y_i, y_{i+1})<p + 2s, \forall i$ by the triangle inequality;
 \item $\nu^{*, S}_p$ maps $[L]_*$ to $[L_S]_*$, where $L_S$ is a $(p+2s)$-loop  represented by $\bullet = y_0, y_1, \ldots, y_k, y_{k+1}=\bullet$. \end{enumerate}
Technical details of the mentioned maps are explained by Proposition \ref{PropMapsFinApprox} and its proof. Their diagram is provided in the statement of Proposition \ref{PropDiagComm}.
\end{Definition}

\begin{figure}
$$
\xymatrix{
\pi_1(\Rips(S, p), \bullet) 
\ar[rr]^{i_{p, p+2s}^{\pi_1, S}}
\ar[d]^{\mu_{p}^{\pi_1, S}}
&& 
\pi_1(\Rips(S, p+2s), \bullet) 
\ar[d]^{\mu_{p+2s}^{\pi_1, S}}\\
\pi_1(\Rips(X, p), \bullet) 
\ar[rr]_{i_{p, p+2s}^{\pi_1, X}} 
\ar[urr]^{\nu_p^{\pi_1, S}}
&&
\pi_1(\Rips(X, p+2s), \bullet) 
}
 $$
 $$
\xymatrix{
\pi_1(\Rips(S, p), \bullet) 
\ar[rr]^{i_{p, q}^{\pi_1, S}}
\ar[d]^{\mu_{p}^{\pi_1, S}}
&& 
\pi_1(\Rips(S, q), \bullet) 
\ar[d]^{\mu_{q}^{\pi_1, S}}\\
\pi_1(\Rips(X, p), \bullet) 
\ar[rr]^{i_{p, q}^{\pi_1, X}} 
&&
\pi_1(\Rips(X, q), \bullet) 
}
 $$

\caption{Diagrams of maps introduced by Definition \ref{DefMapsFinApprox}.}
\label{FigDiag}
\end{figure}

\begin{Proposition}\label{PropMapsFinApprox}
 All maps introduced by Definition \ref{DefMapsFinApprox} are well defined  homomorphisms.
\end{Proposition}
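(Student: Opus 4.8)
The plan is to dispatch the four functorial maps immediately and then concentrate all the work on the family $\nu$, which is the only one not manifestly induced by a simplicial map.

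The maps $i^{\pi_1,S}_{p,q}, i^{G,S}_{p,q}$ and $\mu^{\pi_1,S}_{p}, \mu^{G,S}_{p}$ are induced by the simplicial inclusions $\Rips(S,p)\hookrightarrow\Rips(S,q)$ and $\Rips(S,p)\hookrightarrow\Rips(X,p)$, which are basepoint-preserving since $\bullet\in S$. Applying the functors $\pi_1(-,\bullet)$ and $H_1(-;G)$ to honest simplicial maps produces well defined homomorphisms with no further argument, so I would treat these cases in a single line.

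For the maps $\nu$ the key move is to replace the loop-by-loop choices of Definition \ref{DefMapsFinApprox} by one global choice. Using $s$-density and setting $\rho(\bullet)=\bullet$, fix a vertex map $\rho\colon X\to S$ with $d(x,\rho(x))<s$ for all $x$. If $\sigma=\{z_0,\dots,z_n\}$ is a simplex of $\Rips(X,p)$, then $\diam\{\rho(z_0),\dots,\rho(z_n)\}<\diam(\sigma)+2s<p+2s$ by the triangle inequality, so $\rho$ sends simplices of $\Rips(X,p)$ to simplices of $\Rips(S,p+2s)$ and is simplicial as a map $\Rips(X,p)\to\Rips(S,p+2s)$. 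Hence the induced maps $\rho_*$ on $\pi_1(-,\bullet)$ and on $H_1(-;G)$ are well defined homomorphisms. The whole proposition for $\nu$ then reduces to the single claim that the loop $L_S$ built from \emph{any} admissible choices is $(p+2s)$-homotopic (resp.\ homologous) to $\rho\circ L$; granting this, $\nu^{*,S}_{p}=\rho_*$, and since $\rho_*$ depends only on the class of its argument we obtain for free both independence of the representative $p$-loop and the homomorphism property.

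To prove this comparison I would run the prism argument behind Proposition \ref{PropRips}(8): writing $L=(x_0,\dots,x_k,x_0)$, for each index $i$ the four vertices $y_i,\,y_{i+1},\,\rho(x_i),\,\rho(x_{i+1})$ have all pairwise distances below $p+2s$. Indeed the two ``vertical'' distances satisfy $d(y_i,\rho(x_i))\le d(y_i,x_i)+d(x_i,\rho(x_i))<2s$; the two ``horizontal'' distances are $<p+2s$ because they are edges $x_ix_{i+1}$ of $L$ transported by $\rho$ or by the choice $y$; and each diagonal is bounded by $d(y_i,x_i)+d(x_i,x_{i+1})+d(x_{i+1},\rho(x_{i+1}))<s+p+s$, and symmetrically. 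Thus these four points span a simplex of $\Rips(S,p+2s)$, and filling the resulting ladder of quadrilaterals yields a simplicial homotopy between $L_S$ and $\rho\circ L$ that fixes the basepoint, since $y_0=\rho(x_0)=\bullet$. This gives $[L_S]=\rho_*[L]$ in $\pi_1(\Rips(S,p+2s),\bullet)$; the identical computation at the chain level (triangles of $\Rips(X,p)$ map to triangles of $\Rips(S,p+2s)$, so boundaries to boundaries) gives $\nu^{G,S}_{p}=\rho_*$ on $H_1(-;G)$. I expect the only genuine obstacle to be the verification that these quadrilaterals are simplices of $\Rips(S,p+2s)$; everything else is bookkeeping, and the homological statements either follow verbatim or through the Hurewicz and Universal Coefficients Theorems as indicated after Definition \ref{DefSLSC}.
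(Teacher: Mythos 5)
Your proposal is correct, and it reorganizes the argument in a way that differs from the paper's proof at one essential point. For the inclusion-induced maps $i^{*,S}_{p,q}$ and $\mu^{*,S}_p$ you and the paper do the same (nothing to check beyond functoriality). For $\nu^{*,S}_p$ the paper verifies the two independences separately: independence of the choice of $L_S$ for fixed $L$ via exactly the ladder of quadrilaterals you describe, and independence of the choice of $L$ by taking a simplicial homotopy $H\colon\Delta\to\Rips(X,p)$ between two representatives and pushing each vertex $H(v_i)$ to a nearby point of $S$, thereby transporting the whole homotopy into $\Rips(S,p+2s)$. You instead fix one global nearest-point assignment $\rho\colon X\to S$, observe that it is simplicial as a map $\Rips(X,p)\to\Rips(S,p+2s)$ (the diameter estimate $\diam\rho(\sigma)<\diam(\sigma)+2s$ is the same triangle inequality the paper uses on the triangles of $\Delta$), and then reduce everything to the single comparison $[L_S]=\rho_*[L]$ via the ladder. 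This buys you independence of the representative loop and the homomorphism property for free from functoriality of $\rho_*$, at the cost of the (mild) global choice of $\rho$; the paper's version avoids any global choice and makes explicit the homotopy-transport step, which it reuses elsewhere (e.g.\ in the proof of Theorem \ref{ThmRealizPers}). Both arguments are complete; your distance estimates for the vertical edges, horizontal edges, and diagonals of each quadrilateral are all correct, and the basepoint is fixed since $y_0=\rho(x_0)=\bullet$.
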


\begin{proof}
All mentioned maps are obviously homomorphisms if well defined. The maps $i^{*, S}_{p,q}$ and $\mu^{*, S}_{p,q}$ are well defined as they are induced by inclusions. 

It remains to consider $\nu^{*, S}_p$. We will first prove that $\nu^{\pi_1, S}_p$ is well defined. Consider the notation of Definition \ref{DefMapsFinApprox}. We need to prove that given an element of $\pi_1(\Rips(X,p),\bullet)$, its image does not depend on a choice of $L$ and $L_S$:
\begin{itemize}
 \item independence from the choice of $L_S$: suppose $L$ is fixed and choose $(p+2s)$-loops  $L_S$ and $L_S'$ represented by $\bullet = y_0, y_1, \ldots, y_k, y_{k+1}=\bullet$ and $\bullet = y'_0, y'_1, \ldots, y'_k, y'_{k+1}=\bullet$ respectively. Then $L_S$ and $L'_S$ are $(p+2s)$-homotopic in $\pi_1(\Rips(S,p+2s),\bullet)$ with the homotopy consisting of triangles $(y_i, y'_i, y'_{i+1})$ and $(y_i, y_{i+1}, y'_{i+1})$ of diameter $p+2s$, as depicted on Figure \ref{FigPropMapsFinApprox1}. Thus $[L_S]_{\pi_1}=[L'_S]_{\pi_1}\in \pi_1(\Rips(S,p+2s),\bullet)$.
 \item independence from the choice of $L$: Suppose $L$ and $L'$, given by  $\bullet = x_0, x_1, \ldots, x_k, x_{k+1}=\bullet$ and $\bullet = x'_0, x'_1, \ldots, x'_k, x'_{k+1}=\bullet$  respectively, are $p$-homotopic. That means there is a triangulation $\Delta$ of $S^1 \times I$ and a simplicial map $H\colon  \Delta\to \Rips(X, p)$ corresponding to $L$ and $L'$ on $S^1 \times \{0\}$ and $S^1 \times \{1\}$ respectively. Let $\{v_i\}_{i\in J}$ denote the vertex set of $\Delta$. For each $i\in J$, the later being a finite set, choose $s_i\in S$ with $d(H(v_i), s_i)<s$. This assignment determines $L_S$, and $L'_S$. Also,  a $(p+2s)$-homotopy $H_S\colon (S^1\times I, \Delta)\to \Rips(S, p+2s)$ between them in $\Rips(S, p+2s)$ is induced by $H$ and $\Delta$: the combinatorial structure of the  triangulation  is the same, we only change, for each triangle $(v_{i_1},v_{i_3},v_{i_3})\in \Delta$,  triangle $H(v_{i_1},v_{i_3},v_{i_3})$ by $(s_{i_1},s_{i_3},s_{i_3})$, which is a (potentially singular) triangle in $ \Rips(S, p+2s)$ by the triangle inequality.
 \end{itemize}
  Thus $\nu^{\pi_1, S}_p$ is well defined. The homological case can be proved analogously.
 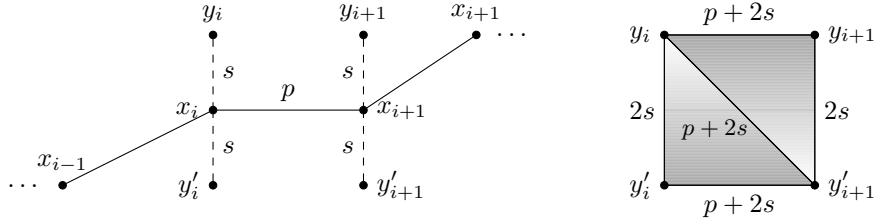
\begin{figure}
\begin{tikzpicture}
\node [circle,  scale=.3, fill=black,draw, label=left:$x_{i}$] (a1) at (0,1) {};
\node [circle,  scale=.3, fill=black,draw, label=above:$x_{i-1}$] (a0) at (-2,0) {};
\node [circle,  scale=.3, fill=black,draw, label=right:$x_{i+1}$] (a2) at (2,1) {};
\node [circle,  scale=.3, fill=black,draw, label=above:$x_{i+1}$] (a3) at (3.5,2) {};
\draw (a0) to (a1) to node [above]{$p$} (a2) to (a3);
\node [circle,  scale=.3, fill=black,draw, label=above:$y_{i}$] (b1) at (0,2) {};
\node [circle,  scale=.3, fill=black,draw, label=above:$y_{i+1}$] (b2) at (2,2) {};
%
\node [circle,  scale=.3, fill=black,draw, label=left:$y'_{i}$] (c1) at (0,0) {};
\node [circle,  scale=.3, fill=black,draw, label=right:$y'_{i+1}$] (c2) at (2,0) {};
%
\draw [dashed] (b1)to node [right]{$s$}(a1);
\draw [dashed] (c1)to node [right]{$s$}(a1);
\draw [dashed] (b2)to node [left]{$s$}(a2);
\draw [dashed] (c2)to node [left]{$s$}(a2);
%
%
\node (a5) at (-2.5,0) {$\dots$};
\node (a6) at (4,2) {$\dots$};
%
\draw[top color=gray,bottom color=white, fill opacity =.5] (8,0) -- (8,2) -- (6,2) -- cycle; 
\draw[bottom color=gray,top color=white, fill opacity =.5] (8,0) -- (6,0) -- (6,2) -- cycle; 
\node [circle,  scale=.3, fill=black,draw, label=left:$y_{i}$] (rb1) at (6,2) {};
\node [circle,  scale=.3, fill=black,draw, label=right:$y_{i+1}$] (rb2) at (8,2) {};
\draw  (rb1)to node [above]{$p+2s$} (rb2);
\node [circle,  scale=.3, fill=black,draw, label=left:$y'_{i}$] (rc1) at (6,0) {};
\node [circle,  scale=.3, fill=black,draw, label=right:$y'_{i+1}$] (rc2) at (8,0) {};
\draw  (rc1)to node [below]{$p+2s$} (rc2);
\draw  (rc1)to node [left]{$2s$}(rb1);
\draw  (rc2)to node [right]{$2s$}(rb2);
\draw  (rb1)to node [below ]{\small{$ p+2s \qquad$}}(rc2);
\end{tikzpicture}
\caption{An excerpt from the proof of Proposition \ref{PropMapsFinApprox} showing that the definition of maps $\nu_p^{*,S}$ is independent of the choice of $L_S$. Labels besides edges suggest that the distance between endpoints is less than the label. We thus obtain a $(p+2s)$-homotopy in $\Rips(S,p+2s)$ consisting of squares on the right, each of which consists of two triangles, whose endpoints are at distance less than $p+2s$.
A similar argument was given by the proof of  Proposition \cite[Proposition 3.2]{ZV}. }
\label{FigPropMapsFinApprox1}
\end{figure}
\end{proof}

\begin{Proposition}\label{PropDiagComm}
 Suppose $X$ is a  geodesic space, $G$ is a Abelian group, $0<p<q$  and $S\subset X$ is an $s$-dense subset containing $\bullet$. Then the diagrams of Figure \ref{FigDiag} and their homological counterparts commute. In particular, the diagrams provide a $(0,2s)$-interleaving between $ \{ \pi_1(\Rips(S,r), \bullet) \}_{r>0}$ and $ \{ \pi_1(\Rips(X,r), \bullet) \}_{r>0}$.
 \end{Proposition}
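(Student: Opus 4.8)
The plan is to verify each cell of the two diagrams of Figure \ref{FigDiag} separately and then repackage the result as an interleaving. Throughout I will use that, by Definition \ref{DefMapsFinApprox}, all the maps act explicitly on representing loops: $\mu$ reinterprets a loop whose vertices lie in $S$ as a loop in $X$ (no change of vertices), $i$ reinterprets a $p$-loop as a $q$-loop (again no change of vertices), and $\nu_p$ replaces each vertex $x_i$ of a based $p$-loop in $X$ by a chosen nearby $y_i\in S$ with $d(x_i,y_i)<s$.

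First I would dispose of the squares, namely Diagram 2 together with the left-hand square of Diagram 1 (its special case $q=p+2s$). Both $i$ and $\mu$ are induced by simplicial inclusions, and the two ways around the square are induced by the two factorizations of the single inclusion $\Rips(S,p)\hookrightarrow\Rips(X,q)$, via $\Rips(S,q)$ and via $\Rips(X,p)$ respectively. These coincide as simplicial maps, so functoriality of $\pi_1$ (and of $H_1$) yields commutativity immediately. Next I would treat the upper-left triangle of Diagram 1, the identity $\nu^{\pi_1,S}_p\circ\mu^{\pi_1,S}_p=i^{\pi_1,S}_{p,p+2s}$. Given a class represented by a $p$-loop $L$ with vertices $x_i$ already in $S$, the map $\mu_p$ leaves $L$ unchanged. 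Since $\nu_p$ is independent of the auxiliary choice of nearby points (exactly the well-definedness established in Proposition \ref{PropMapsFinApprox}), I am free to choose $y_i=x_i$, so the resulting $(p+2s)$-loop $L_S$ is literally $L$. Hence the composite sends $[L]$ to $[L]$ in $\pi_1(\Rips(S,p+2s),\bullet)$, which is precisely $i^{\pi_1,S}_{p,p+2s}[L]$.

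The substantive cell is the lower-right triangle, $\mu^{\pi_1,S}_{p+2s}\circ\nu^{\pi_1,S}_p=i^{\pi_1,X}_{p,p+2s}$. Starting from a $p$-loop $L$ in $X$ with vertices $x_i$, the map $\nu_p$ produces $L_S$ with vertices $y_i\in S$, $d(x_i,y_i)<s$, and $\mu_{p+2s}$ then views $L_S$ as a $(p+2s)$-loop in $X$; I must show this equals $L$ viewed as a $(p+2s)$-loop in $X$, i.e.\ that $[L]=[L_S]$ in $\pi_1(\Rips(X,p+2s),\bullet)$. I would exhibit an explicit based ladder homotopy (as in Figure \ref{FigPropMapsFinApprox1}) built from the triangles $(x_i,y_i,y_{i+1})$ and $(x_i,x_{i+1},y_{i+1})$ for $i=0,\dots,k$. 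The only point to check is that each of these is a genuine simplex of $\Rips(X,p+2s)$: using $d(x_i,x_{i+1})<p$, $d(x_i,y_i)<s$, the estimate $d(x_i,y_{i+1})\le d(x_i,x_{i+1})+d(x_{i+1},y_{i+1})<p+s$, and $d(y_i,y_{i+1})<p+2s$, every edge has length strictly below $p+2s$. This is the main (though still elementary) obstacle: one must confirm that the slack $2s$ introduced by the inclusion is exactly enough to absorb the displacement of the vertices, and in particular that the edges $y_iy_{i+1}$, which may be as long as $p+2s$, stay admissible. Since $x_0=y_0=x_{k+1}=y_{k+1}=\bullet$, the homotopy is based, as required.

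Finally, the interleaving assertion is a repackaging of the above. The family $\{\mu_r\}$ has shift $0$ and $\{\nu_r\}$ has shift $2s$; the two triangles say precisely that $\nu\circ\mu$ and $\mu\circ\nu$ equal the bonding maps $i^{S}$ and $i^{X}$ of shift $2s$, which is the condition that the two families commute with each other, while the squares say that $\mu$ commutes with the bonding maps. Commutativity of $\nu$ with the bonding maps follows by the same vertex-preserving argument: a single choice of nearby points $y_i$ serves both levels $p$ and $q$, so $i^{S}_{p+2s,q+2s}\circ\nu_p=\nu_q\circ i^{X}_{p,q}$. This is exactly the data of a $(0,2s)$-interleaving between $\{\pi_1(\Rips(S,r),\bullet)\}_{r>0}$ and $\{\pi_1(\Rips(X,r),\bullet)\}_{r>0}$. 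The homological counterparts follow verbatim, replacing loops by $1$-cycles and $\pi_1$ by $H_1(\_;G)$.
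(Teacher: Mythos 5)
Your proof is correct and takes essentially the same approach as the paper: the squares commute by functoriality of the inclusions, the upper triangle by choosing $y_i=x_i$ so both composites are identities on vertices, and the lower triangle via the ladder homotopy of Figure \ref{FigPropMapsFinApprox1}. You are in fact slightly more explicit than the paper in verifying the edge-length bounds and the commutativity of $\nu^{*,S}_{*}$ with the bonding maps required for the interleaving.
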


\begin{proof}
The upper triangle of the first diagram commutes as both  maps   can be chosen to be identities on the vertices of representing $p$-loops.

As for the lower triangle, the map $i_{p, p+2s}^{\pi_1, S}$ is the identity on the vertices of  representing $p$-loops. The other map (the composition) sends each point of a representing $p$-loop to some point at distance $s$ according to Definition \ref{DefMapsFinApprox}. However, the homotopy class of the $p$-loop does not change, which can be seen using the argument presented by Figure \ref{FigPropMapsFinApprox1}.

It is apparent that the lower diagram commutes as it is induced by inclusions. Homological versions can be proved in the same way.
\end{proof}

\subsection{Stability}

\begin{Definition}\label{DefBirthDeath}
 For a point $A$ on a $PD(Y, \FF)$ let $\mathbf{b}(A)$ denote its birth, let $\mathbf{d}(A)$ denote its death. Let $\Delta$ denote all the diagonal points.
 
  \end{Definition}

The following is the main stability result for finite subsamples. It is also sketched on Figure  \ref{FigStab}.

\begin{Theorem}\label{ThmStab} [Intrinsic Stability Theorem for finite approximations]
Suppose $X$ is a compact geodesic space, $\FF$ is a field, 
 and $S$  is an $s$-dense sample. Then there exists a perfect $2s$-matching $\phi\colon PD(X, \FF)\to PD(S, \FF)$ and the following holds:
 \begin{enumerate}
 \item $\mathbf{b}_S(A)\leq 2s, \forall A\in PD(S, \FF)\setminus \Delta$;
 \item for each $A \in PD(X,\FF)$ we have $\mathbf{d}(A) \leq \mathbf{d}(\phi(A)) \leq \mathbf{d}(A) + 2s$;
 \item for each $B \in PD(S,\FF)$ mathched to a point on $\Delta$, we have $\mathbf{d}(B) - \mathbf{b}(B) \leq s$.
\end{enumerate}
\end{Theorem}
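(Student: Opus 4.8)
The plan is to build $\phi$ by sending each persistent feature of $X$ to its pushed-in approximation in $S$, and then to verify (1), (2), (3) using the interleaving of Proposition~\ref{PropDiagComm} refined by the geometric filling estimate of Proposition~\ref{PropSNull}. The first point is structural: since every bonding map of the Rips filtration of the geodesic space $X$ is surjective (Proposition~\ref{PropRips}(9)), the module $\{H_1(\Rips(X,r);\FF)\}_{r>0}$ has non-increasing dimension, so each of its bars is born at $0$; by Theorem~\ref{ThmPerBasis} the deaths are exactly the values $d_i=l_i/3$ carried by the geodesic circles $\gamma_i$ of a basis, which is finite in each range by Theorem~\ref{ThmFin}. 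Thus every point of $PD(X,\FF)$ has $\mathbf b=0$ and only the deaths $d_i$ matter. For each $\gamma_i$ I fix a fine sample and push it into $S$ as in Definition~\ref{DefMapsFinApprox}, obtaining a loop $L^{(i)}_S$ with edges of length $<2s+\delta$, so $[L^{(i)}_S]$ is already a cycle of $\Rips(S,2s+\delta)$; under the inclusion-induced map $\mu$ it goes to a $\gamma_i$-sample of $X$ (Proposition~\ref{PropRips}(8)), whence the $[L^{(i)}_S]$ are $\FF$-independent and give honest bars of $PD(S,\FF)$. I let $\phi$ match $(0,d_i)$ to the bar of $[L^{(i)}_S]$ and send the remaining $S$-bars to the diagonal.

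For claim (1) I would argue for all bars at once, by a geodesic bisection showing $i^S_{p,q}$ is onto for $2s<p<q$. Given an edge $\{y,y'\}$ of $\Rips(S,r)$ with $r>2s$, take the geodesic between $y$ and $y'$ in $X$, push its midpoint to some $w\in S$, and note $d(w,y),d(w,y')<\tfrac12 d(y,y')+s<r$; hence $\{y,w,y'\}$ is a triangle of $\Rips(S,r)$ and replaces $\{y,y'\}$ by two strictly shorter edges without leaving $\Rips(S,r)$. Iterating the map $\length\mapsto\tfrac12\length+s$ drives every edge length below $2s+\varepsilon$ while all triangles used stay in $\Rips(S,r)$, so every $1$-cycle of $\Rips(S,r)$ is homologous inside $\Rips(S,r)$ to one supported on edges $<2s+\varepsilon$, i.e.\ coming from level $2s+\varepsilon$. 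Consequently no bar of $PD(S,\FF)$ is born after $2s$, which is claim (1).

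Claim (2) is then geometric and splits into the two inequalities. For the lower bound, if $L^{(i)}_S$ bounds in $\Rips(S,r)$ then, including $S\hookrightarrow X$, its $\mu$-image (a $\gamma_i$-sample) bounds in $\Rips(X,r)$, so the $X$-feature cannot die strictly before its $S$-partner and $\mathbf d(A)\le\mathbf d(\phi(A))$. For the upper bound, Proposition~\ref{PropSNull}(1) shows $L^{(i)}_S$ is already null in $\Rips(S,r)$ as soon as $l_i<3(r-2s)$, i.e.\ $r>d_i+2s$, giving $\mathbf d(\phi(A))\le\mathbf d(A)+2s$. Together with claim (1) this makes each matched pair satisfy $d_\infty\le 2s$.

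It remains to handle the unmatched bars (claim (3)) and confirm $\phi$ is a perfect $2s$-matching; this is where I expect the real difficulty. Since $\mu_r$ is onto for $r>2s$ (as $\mu_r\circ\nu_{r-2s}=i^X_{r-2s,r}$ is a surjective bonding map of $X$) and $i^S_{p,q}=\nu_{q-2s}\circ i^X_{p,q-2s}\circ\mu_p$, the death-count functions $N(q)=\#\{\text{bars with death}\ge q\}$ satisfy $N_X(q)\le N_S(q)\le N_X(q-2s)$, which forces $\phi$ to extend to a perfect $2s$-matching whose unmatched $S$-bars are exactly the classes killed by $\mu$. The obstacle is the \emph{sharp} constant in (3): the interleaving alone only gives that a class in $\ker\mu_r$ dies by $r+2s$ (apply $\nu_r\mu_r=i^S_{r,r+2s}$ to it), because pushing an abstract $X$-filling into $S$ moves two vertices of each triangle and so costs $2s$, and there is no degree-$s$ push map to do better. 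Obtaining the stated $\mathbf d(B)-\mathbf b(B)\le s$ requires showing that, after normalizing the cycle to short edges by the bisection above, a null-in-$X$ class always admits an \emph{economical} filling in which every triangle keeps at least two vertices on the cycle — a cone/thin filling whose push costs only $s$. Proving that such fillings always exist is the delicate heart of the argument and is precisely the gap between the classical ($2s$) and intrinsic ($s$) pictures of Figure~\ref{FigStab}.
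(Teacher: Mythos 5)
Your treatment of (1) and (2) is essentially sound and, in substance, coincides with what the paper relies on: the paper's entire proof is the single sentence that the theorem is an immediate corollary of Proposition~\ref{PropDiagComm}, i.e.\ of the $(0,2s)$-interleaving given by $\mu$ (degree $0$) and $\nu$ (degree $2s$) together with the surjectivity statements of Proposition~\ref{PropMapsFinApprox1}. Your bisection argument for (1) is a legitimate alternative to Proposition~\ref{PropMapsFinApprox1}(2), and Proposition~\ref{PropSNull}(1) is the right geometric input for the upper bound in (2). One repair you should make: assigning ``the bar of $[L^{(i)}_S]$'' to a homology class is not well defined when deaths coincide or bars overlap; the clean route is to shift $PD(S,\FF)$ by $(-s,-s)$, observe that the $(0,2s)$-interleaving becomes a symmetric $s$-interleaving between $\{H_1(\Rips(X,r);\FF)\}$ and the shifted module, and invoke algebraic stability. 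This produces the perfect $2s$-matching and claims (1) and (2) simultaneously, using that every bar of $PD(X,\FF)$ has birth $0$.

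The gap you flag at (3) is real, and your diagnosis is correct: the interleaving only yields $\mathbf{d}(B)-\mathbf{b}(B)\le 2s$, since for a class $c$ of $\Rips(S,b+\eps)$ with $\mu(c)=0$ one gets $i^{S}_{b+\eps,\,b+\eps+2s}(c)=\nu(\mu(c))=0$ and nothing sharper, $\nu$ moving two vertices of every triangle. But you should not try to close this gap with the ``economical filling'' you sketch, because the constant $s$ in (3) is not attainable. Take $X$ to be the closed disc of radius $R$ in $\RR^2$ (compact, geodesic; since $\pi_1(X)$ is trivial, $H_1(\Rips(X,r);\FF)=0$ for all $r$ by Theorem~\ref{ThmPerBasis}, so $PD(X,\FF)$ is empty off the diagonal), and let $S$ be the twelve vertices of an inscribed regular $12$-gon, which is $s$-dense for every $s>R$ (the farthest point of $X$ from $S$ is the centre, at distance $R$). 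By \cite{AA}, $\Rips(S,r)\simeq S^1$ for all $r\in(2R\sin 15^{\circ},\,2R\sin 60^{\circ}]$, so $PD(S,\FF)$ contains a single bar of length $2R(\sin 60^{\circ}-\sin 15^{\circ})\approx 1.21R>s$, and it can only be matched to the diagonal. Thus (3) as stated fails; what actually follows from Proposition~\ref{PropDiagComm} (and what Figure~\ref{FigStab} depicts) is $\mathbf{d}(B)-\mathbf{b}(B)\le 2s$, equivalently $d_\infty(B,\Delta)\le s$, with which the example is consistent. The paper's one-line proof supplies nothing beyond the interleaving either, so you should prove the $2s$ version and note the discrepancy rather than search for a degree-$s$ push map that does not exist.
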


\begin{proof}
The theorem is an immediate corollary of Proposition \ref{PropDiagComm}.
\end{proof}

%

\subsection{Surjectivity}

\begin{Proposition}\label{PropMapsFinApprox1}
The following maps of Definition \ref{DefMapsFinApprox} are surjective:
\begin{enumerate}
 \item $\mu^{*,S}_p$ for $2s\leq p$;
 \item $i^{*,*}_{p,q}$ for $2s\leq p < q$;
 \item $\nu^{*,S}_p$ for $0<p$.
\end{enumerate}
\end{Proposition}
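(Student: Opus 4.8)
The plan is to treat part (3) as the heart of the matter and to deduce (1) and (2) from it together with the surjectivity of the bonding maps $i^{*,X}_{p,q}$ (Proposition \ref{PropRips}(9)) and the commuting diagram of Proposition \ref{PropDiagComm}. Throughout I would argue for $\pi_1$; the homological statements follow by the same reasoning (cf.\ the remark after Definition \ref{DefSLSC}).

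For (3), fix $p>0$ and a based $(p+2s)$-loop $L_S=(\bullet=y_0,y_1,\dots,y_k,y_{k+1}=\bullet)$ with $y_i\in S$ representing an arbitrary class in $\pi_1(\Rips(S,p+2s),\bullet)$. First I would let $\alpha$ be a filling of $L_S$ (consecutive $y_i$ joined by geodesics) and pick a $p$-sample $L$ of $\alpha$ containing every $y_i$ among its vertices; since $p>0$ this only requires inserting finitely many interior points on each geodesic. Then $L$ is a based $p$-loop in $\Rips(X,p)$, and evaluating $\nu^{*,S}_p$ on $[L]$ while choosing each $y_i$ as its own nearby $S$-point yields a $(p+2s)$-loop $L'_S$ still passing through all the $y_i$. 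It then suffices to show that on each segment the perturbed subpath from $y_i$ to $y_{i+1}$ is homotopic rel endpoints in $\Rips(S,p+2s)$ to the single edge $y_iy_{i+1}$. This is the one real computation: writing $g$ for the geodesic from $y_i$ to $y_{i+1}$ of length $\ell<p+2s$, with midpoint $m$ and a nearby $S$-point $z_m$, any perturbed vertex $z_j$ over the half of $g$ nearer $y_i$ satisfies $d(y_i,z_j)<\ell/2+s<(p+2s)/2+s=p/2+2s<p+2s$, and symmetrically near $y_{i+1}$. Hence the two halves fan (through triangles with apices $y_i$ and $y_{i+1}$) onto the edges $y_iz_m$ and $z_my_{i+1}$, and the triangle $y_iz_my_{i+1}$ lies in $\Rips(S,p+2s)$ since all its sides are shorter than $p+2s$. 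Subdividing at the midpoint is essential here, since fanning from a single endpoint over all of $g$ would only give the useless bound $\ell+s<p+3s$. Concatenating the segment homotopies gives $\nu^{*,S}_p([L])=[L'_S]=[L_S]$, proving (3) for every $p>0$.

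Parts (1) and (2) then become formal. The lower triangle of the first diagram in Proposition \ref{PropDiagComm} reads $i^{*,X}_{p,p+2s}=\mu^{*,S}_{p+2s}\circ\nu^{*,S}_p$; since $i^{*,X}_{p,p+2s}$ is surjective (Proposition \ref{PropRips}(9)), so is $\mu^{*,S}_{p+2s}$, which gives (1) for every parameter exceeding $2s$. For (2), the maps $i^{*,X}_{p,q}$ are surjective with no restriction by Proposition \ref{PropRips}(9). For the subsets, given $w\in\pi_1(\Rips(S,q),\bullet)$ with $2s<p<q$, I would use surjectivity of $\nu^{*,S}_{q-2s}$ to write $w=\nu^{*,S}_{q-2s}(v)$, then surjectivity of $i^{*,X}_{p-2s,q-2s}$ to write $v=i^{*,X}_{p-2s,q-2s}(v')$; the naturality square $i^{*,S}_{p,q}\circ\nu^{*,S}_{p-2s}=\nu^{*,S}_{q-2s}\circ i^{*,X}_{p-2s,q-2s}$ (valid by the same ``use the same perturbation on both sides'' argument that proves Proposition \ref{PropDiagComm}) yields $i^{*,S}_{p,q}(\nu^{*,S}_{p-2s}(v'))=w$, so $i^{*,S}_{p,q}$ is surjective.

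The hard part will be the closed endpoint $p=2s$. Every construction above inflates distances by up to $2s$ under the passage from $X$ to $S$ (two $s$-perturbations), so manufacturing a genuine $p$-loop in $\Rips(S,p)$ out of data in $X$ forces the parameter strictly above $2s$; at $p=2s$ one cannot subdivide a geodesic finely enough to absorb the perturbation, and the clean arguments above only deliver the open condition $p>2s$. I would handle this borderline case separately, for instance by exploiting that the finitely many edges of any fixed loop leave a positive slack below the parameter, or by a right-limit argument across $2s$ using that the relevant bonding maps are isomorphisms just above $2s$. This is the only place where the strict-versus-nonstrict inequality genuinely needs attention.
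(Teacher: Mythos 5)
Your argument is sound for the strict cases ($p>2s$ in (1)--(2), all $p>0$ in (3)), and your organization --- proving (3) by a midpoint--fan homotopy and then deducing (1) and (2) from Proposition \ref{PropRips}(9) together with the commutativity of the interleaving diagrams --- is a legitimate alternative to the paper's route, which gives direct constructions for (1) and (2) (while itself noting that (1) also follows from the diagram of Proposition \ref{PropDiagComm}). Your fan computation in (3) is correct and is essentially the same midpoint-subdivision trick the paper deploys inside its proof of (2); the paper's own proof of (3) is slightly different (it takes a $p$-sample of the filling avoiding the midpoints and sends each sample point to the \emph{closer endpoint} of its geodesic segment, so the image is literally $L_S$ with repeated vertices and no fan is needed).

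The genuine gap is the endpoint $p=2s$, which you flag but do not close, and neither of your suggested repairs works. The slack in the finitely many edges of a fixed loop does not help: after filling and taking a $\delta$-sample, the binding constraint is $\delta+2s<p$, which is unsatisfiable at $p=2s$ no matter how small $\delta$ is. A right-limit argument also fails, since surjectivity of $\mu^{*,S}_{2s+\eps}$ only yields surjectivity of the composite $\mu^{*,S}_{2s+\eps}\circ i^{*,S}_{2s,2s+\eps}$, which says nothing about $\mu^{*,S}_{2s}$ because surjectivity does not pass backwards along bonding maps. The tool you are missing is the first line of the paper's proof: since $\{B(y,s)\}_{y\in S}$ is an open cover of the (compact) space $X$, it has a positive Lebesgue number, so $S$ is in fact $s'$-dense for some $s'<s$. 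Rerunning your constructions with $s'$ in place of $s$ (perturb by $s'$, sample at scale $\delta<s-s'$) turns every estimate of the form $\delta+2s'<2s\leq p$ into a strict inequality and covers the closed endpoint uniformly, with no separate case analysis.
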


\begin{proof}
 Note that there exists $s'<s$ so that $S$ is also an $s'$-dense sample of $X$. This is equivalent to the existence of a positive Lebesgue number of $X$ for cover  $\{B(C, s)\}_{C\in S}$. Define $\delta=s-s'$.

(1) We first prove that $\mu^{\pi_1, S}_{p}$ is surjective. This part  also follows from the commutativity of the first diagram in Proposition \ref{PropDiagComm} using Proposition \ref{PropRips} (9). However, the proof here demonstrates the necessity of the requirement $2s\leq p$ and provides the idea for the rest of this proof.  Take a based $p$-loop $L$ in $X$  representing some element in $\pi_1(\Rips(X, p), \bullet)$. Replacing $L$ by a $(p-2s)$-sample of some filling of $L$ and using Proposition \ref{PropRips} (4) we may assume that $L$ is actually a $(p-2s)$-loop represented by $\bullet = x_0, x_1, \ldots, x_k, x_{k+1}=\bullet$. For each $i$ choose $y_i\in S, d(x_i, y_i)<s$ so that  $y_0=y_{k+1}=\bullet$. Let $L'$ denote a $p$-loop represented by $\bullet = y_0, y_1, \ldots, y_k, y_{k+1}=\bullet$. On one hand it is (based) $p$-homotopic to $L$, which can be seen from the example depicted in Figure \ref{FigOpenRips}  (as $d(y_i, y_{i+1})<p$ and $d(y_i, x_{i+1})<p-s,  \forall i$ by the triangle inequality). On the other hand, $L'$ represents an element of $\pi_1(\Rips(S, p), \bullet)$, hence $\mu^{\pi_1, S}_{p}$ is surjective. An analogous argument or an application of the Hurewicz theorem imply that each $i^{G, S}_{p, q}$ is also surjective. 

 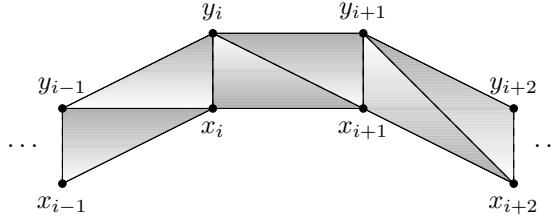
\begin{figure}
\begin{tikzpicture}
\draw[top color=gray,bottom color=white, fill opacity =.5] (-2,1) -- (0,2) --(0,1) -- cycle; 
\draw[top color=gray,bottom color=white, fill opacity =.5] (-2,1) -- (-2,0) --(0,1) -- cycle; 
\draw[bottom color=gray,top color=white, fill opacity =.5] (2,1) -- (0,2) --(0,1) -- cycle; 
\draw[top color=gray,bottom color=white, fill opacity =.5] (2,1) -- (0,2) --(2,2) -- cycle; 
\draw[bottom color=gray,top color=white, fill opacity =.5] (2,1) -- (4,0) --(2,2) -- cycle; 
\draw[top color=gray,bottom color=white, fill opacity =.5] (4,1) -- (4,0) --(2,2) -- cycle; 
\node [circle,  scale=.3, fill=black,draw, label=above:$y_{i-1}$] (a1) at (-2,1) {};
\node [circle,  scale=.3, fill=black,draw, label=above:$y_{i}$] (a2) at (0,2) {};
\node [circle,  scale=.3, fill=black,draw, label=above:$y_{i+1}$] (a3) at (2,2) {};
\node [circle,  scale=.3, fill=black,draw, label=above:$y_{i+2}$] (a4) at (4,1) {};
\draw (a1)--(a2)--(a3)--(a4);
\node [circle,  scale=.3, fill=black,draw, label=below:$x_{i-1}$] (b1) at (-2,0) {};
\node [circle,  scale=.3, fill=black,draw, label=below:$x_{i}$] (b2) at (0,1) {};
\node [circle,  scale=.3, fill=black,draw, label=below:$x_{i+1}$] (b3) at (2,1) {};
\node [circle,  scale=.3, fill=black,draw, label=below:$x_{i+2}$] (b4) at (4,0) {};
\draw (b1)--(b2)--(b3)--(b4);
\draw [dashed] (a1)--(b1);
\draw [dashed] (a2)--(b2);
\draw [dashed] (a3)--(b3);
\draw [dashed] (a4)--(b4);
\draw [dashed] (a1)--(b2);
\draw [dashed] (a2)--(b3);
\draw [dashed] (a3)--(b4);
\node (a5) at (-2.5,.5) {$\dots$};
\node (a6) at (4.5,.5) {$\dots$};
\end{tikzpicture}
\caption{An excerpt of $p$-homotopy from (1) of Proposition \ref{PropMapsFinApprox1}.}
\label{FigOpenRips}
\end{figure}

(2) Maps $i^{\pi_1, X}_{p,q}$ are surjective by Proposition \ref{PropRips}(9). We now prove that $i^{\pi_1, S}_{p,q}$ is surjective. Take a based $q$-loop $L$ in $\Rips(S,q)$ representing some element in $\pi_1(\Rips(S, q), \bullet)$. Suppose $L$ is represented by $\bullet = x_0, x_1, \ldots, x_k, x_{k+1}=\bullet$. For each $i$ choose a geodesic $\alpha_i$ from $x_i$ to $x_{i+1}$ in $X$. Note that a concatenation of paths $\alpha_i$ in the natural order is a filling of $L$. Let us focus on a single $i$ for a moment, to avoid a double indexation. Take a $\delta$-sample of $\alpha_i$ containing the endpoints and the midpoint $a_M$ of $\alpha_i$: $x_i, a_1, a_2, \ldots, a_M, \ldots, a_{m}, x_{i+1}$. For each $a_j$ choose $b_j\in S$ so that $d(a_j, b_j)<s'$. Note that $x_i, b_1, b_2, \ldots, b_{m}, x_{i+1}$ is a $2s$-loop as $\delta + 2 s' < 2s$ and hence represented in $\Rips(S, p)$ as $2s<p$. It is also $q$-homotopic to the $q$-path $x_i, x_{i+1}$ in $\Rips(S, q)$ with fixed endpoints as can be seen by combining triangles $(x_i, b_j, b_{j+1})$ for $j<M$, $(x_{i+1}, b_j, b_{j+1})$ for $j\geq M$, and $(x_i, x_j, x_M)$ as depicted on Figure \ref{FigOpenRips}. Therefore we may concatenate these obtained $p$-paths for all $i$ (in the obvious way) to obtain a $p$-loop in  $\Rips(S, p)$, which is $q$-homotopic to $L$ in $\Rips(S, q)$.  Thus $i^{\pi_1, S}_{p,q}$ is surjective.
 \begin{figure}
\begin{tikzpicture}
\node [circle,  scale=.3, fill=black,draw, label=below:$x_{i}$] (a1) at (-3,0) {};
\node [circle,  scale=.3, fill=black,draw, label=below:$x_{i+1}$] (a2) at (2,0) {};
\draw[dashed] (a1) to  node[above] {$q$}(a2);
\draw (a1) ..controls (0,-1)..  (a2);
\node [circle,  scale=.3, fill=black,draw, label=below:$a_1$] (a3) at (-2,-.34) {};
\node [circle,  scale=.3, fill=black,draw, label=below:$a_2$] (a4) at (-1,-.62) {};
\node [rectangle,  scale=.5, fill=black,draw, label=below:$a_3$] (a5) at (0,-.76) {};
\node [circle,  scale=.3, fill=black,draw, label=below:$a_4$] (a6) at (1,-.49) {};
\node [circle,  scale=.3, fill=black,draw, label=above:$b_1$] (b3) at (-2,2) {};
\node [circle,  scale=.3, fill=black,draw, label=above:$b_2$] (b4) at (-1,2.5) {};
\node [rectangle,  scale=.5, fill=black,draw, label=above:$b_3$] (b5) at (0,2.5) {};
\node [circle,  scale=.3, fill=black,draw, label=above:$b_4$] (b6) at (1,2.5) {};
\draw[dotted] (a3) to  node[right] {$s'$}(b3);
\draw[dotted] (a4) to  node[right] {$s'$}(b4);
\draw[dotted] (a5) to  node[right] {$s'$}(b5);
\draw[dotted] (a6) to  node[right] {$s'$}(b6);
%
%
\node [circle,  scale=.3, fill=black,draw, label=below:$x_{i}$] (ra1) at (3,0) {};
\node [circle,  scale=.3, fill=black,draw, label=below:$x_{i+1}$] (ra2) at (8,0) {};
\draw[top color=gray,bottom color=white, fill opacity =.5] (ra1) -- (4,2) -- (5,2.5) -- cycle; 
\draw[top color=gray,bottom color=white, fill opacity =.5] (ra1) -- (5,2.5) -- (6,2.5) -- cycle; 
\draw[top color=gray,bottom color=white, fill opacity =.5] (ra2) -- (7,2.5) -- (6,2.5) -- cycle; 
\draw[top color=white,bottom color=gray, fill opacity =.5] (3,0) -- (8,0) -- (6,2.5) -- cycle; 
\node [circle,  scale=.3, fill=black,draw, label=above:$b_1$] (rb3) at (4,2) {};
\node [circle,  scale=.3, fill=black,draw, label=above:$b_2$] (rb4) at (5,2.5) {};
\node [rectangle,  scale=.5, fill=black,draw, label=above:$b_3$] (rb5) at (6,2.5) {};
\node [circle,  scale=.3, fill=black,draw, label=above:$b_4$] (rb6) at (7,2.5) {};

\end{tikzpicture}
\caption{An excerpt from the proof of Proposition \ref{PropMapsFinApprox1}(2). The labels of connections suggest that the corresponding distance is less than the label. For a fixed $i$ we connect $x_i$ to $x_{i+1}$ by a geodesic $\alpha_i$ (solid line on the left).  We take a $\delta$-sample of $\alpha_i$ in the form of $x_i, a_1, a_2, a_3, a_4, x_{i+1}$ where $a_3=a_M$ is the midpoint of $\alpha_i$, denoted by a square. For each of $a_j$ we choose $b_j$. The triangulation on the right side provides a $q$-homotopy in $\Rips(S, q)$ between $q$-paths $x_i, x_{i+1}$ and $x_i, b_1, b_2, b_3, b_4, x_{i+1}$ with fixed endpoints.}
\label{FigPropMapsFinApprox2}
\end{figure}
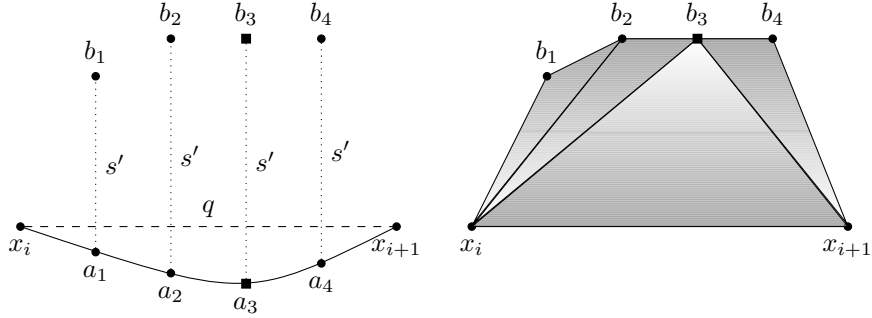
As before, an analogous argument or an application of the Hurewicz theorem imply that each $\mu^{G, S}_{p}$ is also surjective.

(3) It remains to check that maps $\nu^{*, S}_p$ are surjective. Again we will only consider $\nu^{\pi_1, S}_p$. Take a based $(p+2s)$-loop $L_S$  in $\Rips(S, p+2s)$  represented by $\bullet = y_0, y_1, \ldots, y_k, y_{k+1}=\bullet$ and choose its filling $\alpha$. Let $L$ denote a based $p$-sample of  $\alpha$, so that no point of $L$ lies on the midpoint of any of the filled geodesic segment between consecutive vertices of $L_S$, i.e., no vertex of $L$ lies on the midpoint of $\alpha_{i}$, where the latter is the geodesic segment between $y_i$ and $y_{i+1}$ defining $\alpha$. Map $L$ by $\nu^{\pi_1, S}_p$ so that each vertex on $\alpha_i$ is mapped to the closer endpoint of $\alpha_i$, which means either $y_i$ or $y_{i+1}$, according to  Definition \ref{DefMapsFinApprox} (2). This implies that $L$ is mapped to $L_S$ with repetitions of points, for example $\bullet = y_0, y_0, y_1, y_1, \ldots, y_1, y_2, \ldots, y_k, y_k, y_{k+1}=\bullet$. The later is obviously $(p-2s)$-homotopic to $L_S$ in $\Rips(S, p+2s)$.
\end{proof}

\begin{Corollary}
Consider the notation of Definition \ref{DefMapsFinApprox}. Suppose $\tilde s$ is the supremum of Lebesgue numbers of  the cover  $\{B(C, s)\}_{C\in S}$. Then the following maps of Definition \ref{DefMapsFinApprox} are surjective:
\begin{enumerate}
 \item $i^{*,*}_{p,q}$ for $s-\tilde s < p < q$;
 \item $\mu^{*,*}_p$ for $2 (s-\tilde s)< p$.
\end{enumerate}
\end{Corollary}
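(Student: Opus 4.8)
The plan is to re-run the two proofs inside Proposition~\ref{PropMapsFinApprox1}, but with the crude density $s$ replaced by the sharp density read off from $\tilde s$. So the first task is to convert the Lebesgue number into a density statement. By definition every $\lambda<\tilde s$ is a Lebesgue number of $\{B(C,s)\}_{C\in S}$, meaning each ball $B(x,\lambda)$ lies inside some $B(C,s)$. I would show that this is equivalent to $S$ being $(s-\lambda)$-dense, so that the covering radius $\sup_{x\in X}d(x,S)$ equals $s-\tilde s$ and $S$ is $(s-\tilde s+\eps)$-dense for every $\eps>0$. One inequality is exactly the observation already used at the start of the proof of Proposition~\ref{PropMapsFinApprox1} (an $s'$-dense set makes $s-s'$ a Lebesgue number, so $\tilde s\ge s-s'$). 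For the converse I would use the geodesic structure: if $B(x,\lambda)\subset B(C,s)$ while $d(x,C)>s-\lambda$, then prolonging a geodesic from $C$ through $x$ slightly beyond $x$ lands inside $B(x,\lambda)$ but outside $B(C,s)$, a contradiction. This prolongation is the only genuinely metric-geometric point, and it is where I expect to have to be careful, since in a merely compact geodesic space geodesics need not extend past an arbitrary point.

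Granting the effective density $s'=s-\tilde s+\eps$, item~(2) is then a verbatim repetition of the proof of Proposition~\ref{PropMapsFinApprox1}(1) with $s$ replaced by $s'$: a based $p$-loop in $\Rips(X,p)$ is first turned into a $(p-2s')$-loop by sampling a filling (which is legitimate once $p>2s'$), each vertex is pushed to a point of $S$ within $s'$, and the triangles of Figure~\ref{FigOpenRips} show that the resulting $S$-loop is a $p$-loop which is $p$-homotopic to the original in $\Rips(X,p)$. Letting $\eps\to0$ yields surjectivity of $\mu^{*,*}_p$ for every $p>2(s-\tilde s)$, the homological version following by the usual Hurewicz and Universal-Coefficient translation. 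The $X$-level maps appearing in both displayed items (the bonding maps $i^{*,X}_{p,q}$ and the maps into $X$) require no argument at all, being unconditionally surjective by Proposition~\ref{PropRips}(9), which applies because $X$ is geodesic.

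For item~(1) I would imitate the proof of Proposition~\ref{PropMapsFinApprox1}(2): a based $q$-loop $L=(x_0,\dots,x_k)$ in $\Rips(S,q)$ is treated one edge at a time, each edge $x_ix_{i+1}$ being replaced by a chain of points of $S$ sampled along a connecting geodesic, with the triangulation of Figure~\ref{FigPropMapsFinApprox2} providing a $q$-homotopy rel endpoints back to $L$ inside the \emph{larger} complex $\Rips(S,q)$. The main obstacle is the constant, and this is the step where I am least confident. The naive two-sided bookkeeping controls consecutive interior chain vertices only by $d(b_j,b_{j+1})<\delta+2s'$, and so, after sending the sampling width $\delta\to0$, merely reproves surjectivity for $p>2(s-\tilde s)$; a one-sided estimate does not help, because the interior vertices are fattened on both sides. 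The extra factor that the statement claims over the threshold for $\mu^{*,*}_p$ must therefore come from genuinely exploiting the slack $q-p>0$ in the homotopy together with the fact that both endpoints of every edge already lie in $S$ (so that $S$, unlike a geodesic space, receives no free surjectivity from Proposition~\ref{PropRips}(9)). Isolating that slack so that consecutive chain vertices are forced to within $s-\tilde s$ rather than $2(s-\tilde s)$ is precisely what distinguishes the two thresholds in the statement, and carrying it out carefully, possibly by combining the surjectivity of $\mu^{*,*}$ and $\nu^{*,S}$ through the commuting interleaving triangle of Proposition~\ref{PropDiagComm}, is the part of the argument I would expect to consume the most effort.
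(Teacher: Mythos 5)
Your strategy is exactly the paper's: its entire proof of this corollary is the single line ``$S$ is $s'$-dense for each $s'>s-\tilde s$; now apply Proposition~\ref{PropMapsFinApprox1}.'' So you have correctly identified the intended argument, and both of the points where you hesitate are genuine defects of the statement, not of your reading. The Lebesgue-number-to-density conversion is the converse of the implication that is actually true ($s'$-density gives the Lebesgue number $s-s'$, hence $s-\tilde s\le\rho$ where $\rho=\sup_{x\in X}d(x,S)$ is the covering radius), and it fails without extra hypotheses for precisely the reason you suspect: for $X$ a geodesic circle of circumference $1$, $S=\{\bullet\}$ and $s=0.6$, the single ball $B(\bullet,s)$ is all of $X$, so every $\lambda>0$ is a Lebesgue number, $\tilde s=\infty$ and the hypotheses of the corollary are vacuous, yet $\mu^{\pi_1,S}_p$ maps the trivial group to $\pi_1(\Rips(X,p),\bullet)\cong\ZZ$ for $p\le 1/3$. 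Your geodesic-prolongation repair is the right idea where it applies but cannot be made unconditional; the clean fix is to state the corollary with $\rho$ in place of $s-\tilde s$.

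You are also right that item (1) cannot be reached with the stated constant by this route, and you should stop looking for the missing idea: the constant itself appears to be the error. Substituting the improved density into Proposition~\ref{PropMapsFinApprox1}(2) yields surjectivity of $i^{*,S}_{p,q}$ only for $p>2(s-\tilde s)$, the same threshold as item (2), and neither the slack $q-p$ nor the interleaving triangle can lower it, because the obstruction already lives in $\Rips(S,p)$ before any map is applied. Concretely, let $X$ be the geodesic circle of circumference $1$, let $S$ be four equidistant points with $\bullet\in S$, and let $s=1/4$; then $\tilde s=1/8$, so $s-\tilde s=1/8$, but $\Rips(S,p)$ is discrete for $p\le 1/4$ while $\Rips(S,q)$ is a $4$-cycle for $q\in(1/4,1/2]$, so $i^{\pi_1,S}_{p,q}\colon 0\to\ZZ$ is not surjective for $1/8<p\le 1/4<q\le 1/2$. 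With the threshold in (1) corrected to $2(s-\tilde s)<p<q$ (and with $s-\tilde s$ replaced by $\rho$ as above), both items become the verbatim substitution into Proposition~\ref{PropMapsFinApprox1} that you already carried out for item (2), with the $X$-level maps handled unconditionally by Proposition~\ref{PropRips}(9) as you note.
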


\begin{proof}
 Note that $S$ is $s'$ dense for each $s'> s - \tilde s$ and use Proposition \ref{PropMapsFinApprox1}.
\end{proof}

\section{Approximation}

\begin{Proposition}\label{PropIso}
 Suppose $X$ is a  geodesic space, $G$ is an Abelian group, $0<p$ and $S\subset X$ is an $s$-dense subset containing $\bullet$. Suppose $q\geq 0$ and there are no critical values for $\pi_1$-persistence (or for $H_1(\_; G)$-persistence) of open Rips complexes of $X$ on $(p, p+2s+q]$. Then:
 \begin{enumerate}
 \item maps $\mu_r^{\pi_1, S}$ ($\mu_r^{G, S}$ respectively) are isomorphisms for all $r\in (p+2s, p+2s+q]$.
 \item maps $i_{p', q'}^{\pi_1, S}$ ($i_{p', q'}^{G, S}$ respectively) are isomorphisms for all $[p', q']\subseteq (p+2s, p+2s+q]$.
\end{enumerate}
\end{Proposition}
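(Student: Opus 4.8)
The plan is to run the entire argument through the $(0,2s)$-interleaving of Proposition \ref{PropDiagComm}, using it to transport the hypothesis on $X$ across the comparison maps $\mu$ and $\nu$. For brevity write $f_r=\mu^{\pi_1,S}_r$ and $g_r=\nu^{\pi_1,S}_r$, and denote by $i^{X}_{a,b}=i^{\pi_1,X}_{a,b}$ and $i^{S}_{a,b}=i^{\pi_1,S}_{a,b}$ the bonding maps on the $X$- and $S$-sides. From the lower commuting triangle of the first diagram of Proposition \ref{PropDiagComm} I extract, for every parameter $t$, the interleaving identity $f_{t+2s}\circ g_t=i^{X}_{t,t+2s}$, and from the commuting square the naturality identity $f_{b}\circ i^{S}_{a,b}=i^{X}_{a,b}\circ f_{a}$.

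Next I would promote the hypothesis into honest isomorphisms of the $X$-side bonding maps: absence of critical values on $(p,p+2s+q]$ forces $i^{X}_{c,c'}$ to be an isomorphism for all $p<c\le c'\le p+2s+q$. Each $i^{X}_{a,b}$ is surjective by Proposition \ref{PropRips}(9), so only injectivity is in question. Non-criticality at an interior point $c$ yields an $\eps_0>0$ with $i^{X}_{c-\eps,c}$ and $i^{X}_{c,c+\eps}$ isomorphisms for $0<\eps\le\eps_0$ (that one such $\eps$ propagates to all smaller ones is a one-line factorization using surjectivity). Hence, with $a=p$ and $b=p+2s+q$, the set $T=\{c\in(a,b]: i^{X}_{c,b}\text{ is an isomorphism}\}$ is nonempty and both open and closed in the connected interval $(a,b]$, so $T=(a,b]$; composing adjacent isomorphisms then gives $i^{X}_{c,c'}$ an isomorphism throughout the claimed range.

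The conceptual crux is part (1), where the decisive point is that $g$ is already known to be surjective. Fix $r\in(p+2s,p+2s+q]$ and use the identity $f_r\circ g_{r-2s}=i^{X}_{r-2s,r}$. Since $r-2s>p>0$, the map $g_{r-2s}$ is surjective by Proposition \ref{PropMapsFinApprox1}(3), and since $(r-2s,r]\subseteq(p,p+2s+q]$ the composite $i^{X}_{r-2s,r}$ is an isomorphism by the previous paragraph. Surjectivity of $g_{r-2s}$ together with injectivity of the composite forces $f_r$ injective (if $f_r(x)$ is trivial, write $x=g_{r-2s}(y)$; then $i^{X}_{r-2s,r}(y)$ is trivial, so $y$ and hence $x$ are trivial), and $f_r$ is surjective because the composite is. Thus $f_r=\mu^{\pi_1,S}_r$ is an isomorphism.

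Part (2) is then formal. For $[p',q']\subseteq(p+2s,p+2s+q]$ the naturality identity reads $f_{q'}\circ i^{S}_{p',q'}=i^{X}_{p',q'}\circ f_{p'}$, in which $f_{p'}$ and $f_{q'}$ are isomorphisms by part (1) and $i^{X}_{p',q'}$ is an isomorphism because $(p',q']\subseteq(p,p+2s+q]$; hence $i^{S}_{p',q'}=f_{q'}^{-1}\circ i^{X}_{p',q'}\circ f_{p'}$ is an isomorphism. The homological statements follow verbatim, replacing $\pi_1$ by $H_1(\_;G)$ and invoking the homological halves of Propositions \ref{PropDiagComm}, \ref{PropRips}, and \ref{PropMapsFinApprox1}. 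I expect the only genuinely delicate step to be the local-to-global passage of the second paragraph — correctly handling the half-open interval and the asymmetry between the left- and right-critical definitions — since everything after that is bookkeeping with the interleaving diagram.
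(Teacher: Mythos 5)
Your proof is correct and follows essentially the same route as the paper: surjectivity of $\nu^{\pi_1,S}_{r-2s}$ together with the interleaving identity $\mu^{\pi_1,S}_{r}\circ\nu^{\pi_1,S}_{r-2s}=i^{\pi_1,X}_{r-2s,r}$ (an isomorphism by the hypothesis on critical values) yields injectivity of $\mu^{\pi_1,S}_r$, and part (2) follows from the commuting square. The only difference is that you spell out the local-to-global argument converting ``no critical values on $(p,p+2s+q]$'' into ``the $X$-side bonding maps there are isomorphisms,'' a step the paper treats as immediate.
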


\begin{proof}
We will only provide a proof for the $\pi_1$ case. Since $\mu_r^{\pi_1, S}$ is surjective by Proposition \ref{PropMapsFinApprox}, we only need to prove it is injective. Since map $\nu_{r-2s}^{\pi_1, S}$ is surjective (by Proposition \ref{PropMapsFinApprox}) and $i_{r-2s, r}^{\pi_1, X}$ is an isomorphism (by the assumption on the critical values), the commutativity of the lower triangle of the upper diagram of Proposition \ref{PropDiagComm} implies $\mu_r^{\pi_1, S}$ is injective, hence an isomorphism. The last statement of the proposition holds by the commutativity of the lower diagram in Proposition \ref{PropDiagComm}.
\end{proof}

\begin{Theorem}\label{ThmRealizRips}
[Obtaining fundamental groups of Rips complexes by finite samples]
Suppose $X$ is a compact geodesic space, $G$ is a Abelian group and $r>0$. Let $c$ be the largest critical value of  $\pi_1$-persistence (or $H_1(\_; G)$-persistence) of $X$ via open Rips complexes, that is smaller than $r$ (if no critical value is smaller than $r$ we set $c$ to be any number on $(0,r)$). Such $c$ exists by Theorem \ref{ThmFin} \textbf{a}.
 Choose a finite $(r-c)/2$-dense sample $S\subset X$ containing $\bullet$. Then $\mu^{\pi_1, S}_r \colon \pi_1(\Rips(S, r), \bullet) \to \pi_1(\Rips(X, r), \bullet)$ (or $\mu^{G, S}_r$ respectively) is an isomorphism.
\end{Theorem}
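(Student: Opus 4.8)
The plan is to prove the $\pi_1$ statement; the $H_1(\_;G)$ case then follows by the same argument (or through the Hurewicz and Universal Coefficients theorems), as indicated after Definition \ref{DefSLSC}. Write $s=(r-c)/2$, so that $2s=r-c$ and $r-2s=c$. Since $\mu^{\pi_1,S}_r$ is surjective by Proposition \ref{PropMapsFinApprox1}(1) (the parameter $r$ satisfies $r\geq 2s$ because $c\geq 0$), it remains to prove injectivity. For this I would use the commuting lower triangle of the upper diagram in Proposition \ref{PropDiagComm}: the composite $\mu^{\pi_1,S}_r\circ \nu^{\pi_1,S}_{r-2s}$ equals the bonding map $i^{\pi_1,X}_{r-2s,r}$. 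Because $\nu^{\pi_1,S}_{r-2s}$ is surjective by Proposition \ref{PropMapsFinApprox1}(3), injectivity of $\mu^{\pi_1,S}_r$ reduces to showing that $i^{\pi_1,X}_{r-2s,r}$ is an isomorphism. This is precisely the mechanism behind Proposition \ref{PropIso}. The catch is that $r-2s=c$ is itself a critical value, so $i^{\pi_1,X}_{c,r}$ need not be an isomorphism; this is the obstruction the argument must circumvent, and it is why Proposition \ref{PropIso} cannot be quoted verbatim with $p=c$.

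The key refinement is to shrink the density slightly. Since $S$ is finite and $s$-dense, the open cover $\{B(C,s)\}_{C\in S}$ of the compact space $X$ has a positive Lebesgue number, so there is $s'<s$ for which $S$ is still $s'$-dense (exactly as in the first paragraph of the proof of Proposition \ref{PropMapsFinApprox1}). Re-running the reduction of the first paragraph with $s'$ in place of $s$, injectivity of $\mu^{\pi_1,S}_r$ now follows once I show that $i^{\pi_1,X}_{r-2s',r}$ is an isomorphism, where the relevant composite is $\mu^{\pi_1,S}_r\circ \nu^{\pi_1,S}_{r-2s'}=i^{\pi_1,X}_{r-2s',r}$. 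The gain is that $r-2s'>r-2s=c$, so the left parameter $r-2s'$ now lies strictly inside the critical-value-free open interval $(c,r)$ instead of sitting at its endpoint $c$. This separation from the critical value $c$ is exactly what the choice of density $(r-c)/2$ is tuned to afford.

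The remaining and main point is to verify that $i^{\pi_1,X}_{r-2s',r}$ is an isomorphism even when $r$ itself happens to be a critical value. Here I would invoke the persistence-basis correspondence of Theorem \ref{ThmPerBasis}: up to the stated isomorphism, $\pi_1(\Rips(X,t),\bullet)$ is the quotient of $\L(X,fin,\pi_1)$ in which the relation $\beta_i$ becomes trivial exactly for $t$ past the associated critical value $l_i/3$, the bonding maps being the induced quotient maps. Consequently $i^{\pi_1,X}_{r-2s',r}$ is an isomorphism precisely when no critical value $l_i/3$ lies in the half-open interval $[r-2s',r)$. Since $c$ is the largest critical value below $r$, the interval $(c,r)$ contains none, and $[r-2s',r)\subseteq(c,r)$. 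The subtle feature is that the dependence is half-open rather than closed: because the persistence intervals of a compact geodesic space are closed on the right, a critical value occurring exactly at $r$ corresponds to some $l_i/3=r$, which is excluded from $[r-2s',r)$ and therefore does not obstruct the isomorphism. I expect this closed-on-the-right boundary analysis to be the step requiring the most care, since a naive application of Proposition \ref{PropIso} with its closed interval $(p,p+2s+q]$ would fail exactly in the case of critical $r$. With $i^{\pi_1,X}_{r-2s',r}$ shown to be an isomorphism, the reduction yields that $\mu^{\pi_1,S}_r$ is injective, hence an isomorphism, completing the proof.
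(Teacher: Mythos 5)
Your proposal is correct and follows essentially the same route as the paper, which simply invokes Proposition \ref{PropIso} (whose proof is exactly your mechanism: surjectivity of $\nu^{\pi_1,S}_{r-2s'}$, the commuting triangle of Proposition \ref{PropDiagComm}, and the isomorphism $i^{\pi_1,X}_{r-2s',r}$, with the shrinkage $s'<s$ coming from the Lebesgue-number observation already used in Proposition \ref{PropMapsFinApprox1}). Your extra care at the endpoint --- noting that $r-2s'>c$ and that a critical value at $r$ itself does not obstruct $i^{\pi_1,X}_{r-2s',r}$ being an isomorphism because the relevant interval is the half-open $[r-2s',r)$ --- is in fact needed, since a literal application of Proposition \ref{PropIso}, whose hypothesis forbids critical values on $(p,p+2s+q]$ including the right endpoint, would exclude the case where $r$ is itself a critical value, a case the theorem (and the reconstruction statement in the abstract, with $r=\ell/3$) is meant to cover.
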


\begin{proof}
 Follows from Proposition \ref{PropIso}.
 \end{proof}

\begin{Remark}
 As a corollary we see the following: given a compact geodesic SLSC space, its fundamental group is finitely presented. Using an existence of a geodesic metric (via compactness and convex metric as in \cite{DV}) we can generalize this result to compact, connected, locally connected SLSC metric spaces. This is the main result of \cite{DV} and also appears in \cite[Lemma 7.7]{CC}.  
\end{Remark}

\begin{Remark}\label{RemException}
A version of Theorem \ref{ThmRealizRips} in the context of closed Rips filtrations  holds for all values $r$, that are not a critical value. The problem with reconstruction at critical values arises because in this case the critical values are left critical values by \cite{ZV}.  According to the discussion at the end of this section, groups $\pi_1(\Rips(X,r),\bullet)$ and $\pi_1(\Rips(S,r),\bullet)$, with  $r=c$ being a critical value and $S \subset X$, are isomorphic via $\mu_r^{\pi_1,S}$ if $S$ is dense enough and contains three equidistant points on certain geodesic circles of length $3r$. In such case the density condition alone does not suffice.
\end{Remark}

\begin{Corollary}\label{CorAaa}
 Suppose $X$ is a compact geodesic space, $C$ is a collection of critical values of $\pi_1$-persistence of $X$ via open Rips complexes, $s>0$, and $S\subset X$ is an $s$-dense subset. Define $\mathcal J = (0, \infty)\setminus \bigcup_{c\in C} [c-2s, c)$. Then
$$
\{\pi_1(\Rips(X,r),\bullet)\}_{r\in \mathcal J} \cong \{\pi_1(\Rips(S,r),\bullet)\}_{r\in \mathcal J}.
$$

Analogous statement holds for $H_1(\_; G)$-persistence for any Abelian group $G$.
\end{Corollary}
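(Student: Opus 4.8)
The plan is to exhibit the isomorphism explicitly as the family of comparison maps $\mu^{\pi_1,S}_r$ of Definition~\ref{DefMapsFinApprox}, restricted to $r\in\mathcal J$, and to reduce the statement to a purely pointwise one. Commutativity with the bonding maps is free: the lower square of Proposition~\ref{PropDiagComm} says that $\mu^{\pi_1,S}_r$ intertwines $i^{\pi_1,S}_{r,r'}$ with $i^{\pi_1,X}_{r,r'}$ for all $r<r'$, so by the notion of isomorphism of persistences in Definition~\ref{DefFiltration} it is enough to show that each component $\mu^{\pi_1,S}_r$ with $r\in\mathcal J$ is an isomorphism. The homological statement will follow from the same windows, or via the Hurewicz and Universal Coefficients theorems, exactly as every homological analogue in the paper.

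For the pointwise claim I would invoke Proposition~\ref{PropIso}, which is the engine that converts a critical-value-free window $(p,p+2s+q]$ into isomorphisms $\mu_r$ on the sub-range $(p+2s,p+2s+q]$. Surjectivity of $\mu_r$ is already granted by Proposition~\ref{PropMapsFinApprox1}(1) for $r\ge 2s$, so the content is injectivity, which Proposition~\ref{PropIso} extracts from the surjectivity of $\nu^{\pi_1,S}_{r-2s}$ and the fact that $i^{\pi_1,X}_{r-2s,r}$ is an isomorphism whenever no critical value lies in the $2s$-window just below $r$. Fixing $r\in\mathcal J$, I would use Theorem~\ref{ThmFin}\textbf{a} --- the critical values are discrete above any positive threshold --- to see that the critical values near $r$ form a genuine gap structure, and then choose an admissible window $(p,p+2s+q]$ placing $r$ in its isomorphism range. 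Running $r$ over $\mathcal J$ yields all the components, most cleanly by handling one maximal gap between consecutive critical values at a time and taking the union.

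The step I expect to be the real obstacle is the \emph{critical-value bookkeeping}: showing that the parameters at which Proposition~\ref{PropIso} fails to force an isomorphism are captured exactly by the excised set $\bigcup_{c\in C}[c-2s,c)$, and by nothing more. This is delicate because Proposition~\ref{PropIso} naturally controls $\mu_r$ through the window \emph{below} $r$, and because a $2s$-shift is built into it --- the no-critical-value window $(p,p+2s+q]$ and its isomorphism sub-range $(p+2s,p+2s+q]$ differ by $2s$ on the left --- so one must align this shift and the half-open endpoint conventions with the intervals $[c-2s,c)$ appearing in the definition of $\mathcal J$ before the two sets coincide. A secondary caveat to verify is the behaviour at small parameters: Proposition~\ref{PropMapsFinApprox1}(1) provides surjectivity only for $r\ge 2s$, so one should confirm that the argument never demands an isomorphism below that threshold. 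Once this bookkeeping is organized, the corollary follows from Proposition~\ref{PropIso} with no additional geometric input.
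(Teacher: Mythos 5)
Your overall strategy --- take the comparison maps $\mu^{\pi_1,S}_r$, note via the lower square of Proposition \ref{PropDiagComm} that they commute with the bonding maps, and reduce to showing that each $\mu^{\pi_1,S}_r$ with $r\in\mathcal J$ is an isomorphism via Proposition \ref{PropIso} --- is exactly the route the paper intends (the corollary is stated without proof, immediately after Theorem \ref{ThmRealizRips}, whose proof is ``Follows from Proposition \ref{PropIso}''). But the step you defer as ``critical-value bookkeeping'' is the entire content of the corollary, and if you actually carry it out it does not close. Proposition \ref{PropIso} makes $\mu^{\pi_1,S}_r$ an isomorphism exactly when a window \emph{below} $r$ is free of critical values: its proof needs $i^{\pi_1,X}_{r-2s,r}$ to be injective, and for the open filtration (intervals closed on the right) this holds precisely when no critical value lies in $[r-2s,r)$. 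The set of $r$ for which some critical value $c$ satisfies $r-2s\le c<r$ is $\bigcup_{c\in C}(c,c+2s]$, an interval sitting \emph{above} each critical value, whereas the set excised in the definition of $\mathcal J$ is $\bigcup_{c\in C}[c-2s,c)$, sitting \emph{below} each critical value. These sets are not equal, so Proposition \ref{PropIso} does not cover all of $\mathcal J$, and no choice of windows $(p,p+2s+q]$ will make it do so.

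Worse, the pointwise claim genuinely fails at points of $\mathcal J$, so the gap cannot be patched without changing the excised intervals. Take $X=S^1$ of circumference $1$ (single critical value $c=1/3$) and a finite $s$-dense $S$ not containing three equidistant points. The essential class of $\pi_1(\Rips(S,r),\bullet)$ cannot die before $r=1/3$, since $\mu_r$ carries it to the generator of $\pi_1(\Rips(X,r),\bullet)\cong\ZZ$ there, and in general it survives to some $d_S$ strictly between $1/3$ and $1/3+2s$ (Proposition \ref{PropSNull}(1) only kills it once $r>1/3+2s$); meanwhile $\pi_1(\Rips(X,r),\bullet)$ is trivial for all $r>1/3$. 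Every $r\in(1/3,d_S]$ lies in $\mathcal J$ as defined, yet the two groups are $\ZZ$ and the trivial group, so not even an abstract isomorphism exists. Your secondary caveat about small parameters is also real and unresolved: $\mathcal J$ contains arbitrarily small $r$, where $\Rips(S,r)$ may be a discrete set and $\mu_r$ fails even to be surjective. So the proposal is incomplete at exactly the point you flagged, and the missing step is not organizable as you assert: to finish, one must replace the excised set by something of the form $(0,2s]\cup\bigcup_{c\in C}(c,c+2s]$, after which your reduction to Proposition \ref{PropIso} does go through.
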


\begin{Theorem}\label{ThmRealizPers}
[Obtaining Persistence  by finite samples]
Given a pointed compact geodesic space $X$ and $p>0$ there exists a finite subset $S\subset X$ for which maps $\mu^{\pi_1, S}_*$ induce an isomorphism
$$
\{\pi_1(\Rips(X,r),\bullet)\}_{r>p} \cong \{\pi_1(\Rips(S,r),\bullet)\}_{r>p}
$$
 A generic sample $S\subset X$ however does not have such property. 

Analogous statement holds for $H_1(\_; G)$-persistence  for any Abelian group $G$.
\end{Theorem}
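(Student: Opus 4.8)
The plan is to convert the persistence isomorphism into a pointwise injectivity statement and then concentrate all the work on the finitely many parameters where a naive sample fails. Since the maps $\mu^{\pi_1,S}_r$ commute with the bonding maps of both Rips filtrations (the lower square of Proposition \ref{PropDiagComm}), the family $\{\mu^{\pi_1,S}_r\}_{r>p}$ is an isomorphism of persistences exactly when each $\mu^{\pi_1,S}_r$ is a group isomorphism for $r>p$. As $\mu^{\pi_1,S}_r$ is surjective once $2s\le r$ (Proposition \ref{PropMapsFinApprox1}(1)), the whole task reduces to exhibiting a finite $S$ for which every $\mu^{\pi_1,S}_r$, $r>p$, is injective.

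First I would pin down the finitely many dangerous parameters. By Theorem \ref{ThmFin}\,\textbf{a} only finitely many critical values exceed $p$; list them, together with $p$ itself should $p$ be critical, as $p\le c_1<\dots<c_n$. By the persistence-basis correspondence (Theorem \ref{ThmPerBasis}) each $c_i$ is realized by a finite family of geodesic circles $\gamma_{i,1},\dots,\gamma_{i,m_i}$ of length exactly $3c_i$. I would then fix $s>0$ so small that $2s<p$, that $2s$ is below every gap $c_{i+1}-c_i$, and that $(p-2s,p)$ contains no critical value; this makes the length-$2s$ windows to the right of the $c_i$ pairwise disjoint and confines every delayed death to those windows. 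Finally I would take any finite $s$-dense $S_0\ni\bullet$ and set $S=S_0\cup\{\text{three equidistant points on each }\gamma_{i,j}\}$, which is finite, still $s$-dense, and contains $\bullet$.

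Away from these windows the isomorphism is already in hand: on every critical-value-free subinterval of $(p,\infty)$ Proposition \ref{PropIso} (see also Corollary \ref{CorAaa}) makes $\mu^{\pi_1,S}_r$ an isomorphism. The crux, and what I expect to be the main obstacle, is the behaviour inside the windows, that is at and just past a critical value. For a generic sample injectivity fails there: the class carried by a circle of length $3c_i$ dies in $X$ as soon as $r>c_i$, whereas Proposition \ref{PropSNull}(1) alone only makes its $S$-shadow contractible once $r>c_i+2s$, so on $(c_i,c_i+2s]$ there are surplus classes that $\mu^{\pi_1,S}_r$ kills. The purpose of the three equidistant points is precisely to delete this lag: by Proposition \ref{PropSNull}(2) any loop of $S$ shadowing $\gamma_{i,j}$ is contractible in $\Rips(S,r)$ the moment $3c_i<3r$, i.e.\ for every $r>c_i$, so the death in $S$ is synchronized with the death in $X$.

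To finish I would verify that, on each window, these circle-classes exhaust the obstruction to injectivity. The structural input is again Theorem \ref{ThmPerBasis}: the classes dying as one passes $c_i$ are exactly the $[\gamma_{i,j}]$ (equivalently, by Theorem \ref{ThmFin}\,\textbf{b}, the relations gained on crossing $c_i$ are generated by geodesic lassos on circles of length $3c_i$). Since $\mu^{\pi_1,S}_r$ is an isomorphism immediately below $c_i$, is surjective throughout, and each $[\gamma_{i,j}]$ is now annihilated in $\Rips(S,r)$ for $r>c_i$ by the previous paragraph, a diagram chase in the square of Proposition \ref{PropDiagComm} forces $\ker\mu^{\pi_1,S}_r=0$ on all of $(c_i,c_i+2s]$. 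Hence $\mu^{\pi_1,S}_r$ is an isomorphism for every $r>p$, which is the claimed isomorphism of persistences. The genericity assertion then falls out by reversing the reasoning: a generic finite $S$ has no three equidistant points on any $\gamma_{i,j}$, only Proposition \ref{PropSNull}(1) applies, some circle-class outlives its $X$-death time, and the death coordinates recorded by $PD(S,\FF)$ and $PD(X,\FF)$ disagree near the relevant $c_i$. The $H_1(\_;G)$-statement is proved identically, with lassos and simplicial nullhomotopies replaced by their homological analogues via the Hurewicz and Universal Coefficients theorems.
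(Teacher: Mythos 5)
Your construction of $S$ and the reduction to pointwise injectivity of $\mu^{\pi_1,S}_r$ coincide with the paper's, but your injectivity argument takes a genuinely different route. The paper works directly at the parameter $r$: it takes a kernel element, decomposes a filling of it (via Theorem \ref{ThmFin}\,\textbf{b}) as $\tilde\beta_L * \hat\beta_L$, transports the resulting homotopy in $X$ to an explicit simplicial homotopy $F$ in $\Rips(S,r)$ by triangulating $I\times I$ finely and choosing nearby sample points, and then contracts each lasso shadow using Proposition \ref{PropSNull}(1) and (2). You instead factor through a parameter $p'$ just below the critical value $c_i$, where Proposition \ref{PropIso} already gives an isomorphism, and use the normal-closure description of $\ker i^X_{p',r}$ from Theorem \ref{ThmPerBasis} to reduce injectivity on the window $(c_i,c_i+2s]$ to killing finitely many explicit generators in $\Rips(S,r)$. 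This is cleaner in one respect: the ``old'' relations (the paper's $\tilde\beta_L$, handled there by Proposition \ref{PropSNull}(1)) are absorbed into the isomorphism at $p'$ and never need to be contracted by hand. The price is that two points you leave implicit must be made precise. First, the element $i^S_{p',r}\bigl(\mu_{p'}^{-1}[\beta_j]\bigr)$ must be exhibited as the class of an $S$-shadow of the whole lasso $\alpha_j*\gamma_j*\alpha_j^-$, not just of the circle $\gamma_j$; this uses that $\mu_{p'}^{-1}$ is realized by $\nu$ (sample a filling, push to $S$), that the shadow of the tail can be chosen to backtrack exactly (the paper's conditions (ii) and (3)), and that a \emph{free} nullhomotopy of the circle shadow from Proposition \ref{PropSNull}(2) yields a \emph{based} triviality of the lasso. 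Second, at the isolated parameters $r=c_{i+1}$ the literal hypothesis of Proposition \ref{PropIso} (no critical values on a half-open interval closed on the right) is not met, although its proof still applies there because intervals of open-Rips persistence are closed on the right, so nothing dies on $[c_{i+1}-2s,c_{i+1})$. With these two points supplied, your argument is sound, and your explanation of the genericity clause matches the paper's discussion following Theorem \ref{ThmReconstruct}.
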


\begin{proof} 
Throughout the proof we will be using Theorem \ref{ThmPerBasis} and its  notation without any particular notice.  
In particular, 
we choose geodesic lassos $\{\beta_i=\alpha_i * \gamma_i * \alpha_i^-\}_{i\in J}$, with $\gamma_i$ being a geodesic circle of length $l_i$,  so that
 $\big\{\pi_1(\Rips(X,r),\bullet)\big\}_{r>0}\cong\big\{\L(X,fin, \pi_1)/
\{\beta_i\big\}_{l_i<r}
\}_{r>0}$. Recall that the set of critical values coincides with $\{l_i/3\}_{i\in J}$.

\textbf{Constructing $S$.} Let $2s$ denote the distance between $p$ and the largest critical value smaller than $p$. If such critical value does not exist choose any positive $s< p/2$. Choose a finite $s$-dense subset of $S \subset X$ containing $\bullet$.  
Define a finite (see Theorem \ref{ThmFin} \textbf{a}) collection $C=\{j\in J \mid l_j/3 \geq p\}$. For each $j\in C$ choose three equidistant points $x^j_1, x^j_2, x^j_3$ on $\gamma_j$, with $x^j_1$ being the endpoint of $\alpha_j$, and add all these points to $S$. For a sketch see Figure \ref{FigL3P}. Note that $d(x^j_m, x^j_n)=l_j/3, \forall m \neq n$ as $\gamma_j$ are geodesic circles. By Proposition \ref{PropMapsFinApprox1} $\mu_r^{\pi_1, S}$ is injective, so it only remains to show  that $\mu_r^{\pi_1, S}$ is injective, $\forall r>p$.
 
\begin{figure}
\begin{tikzpicture}[scale=.9]
\draw [thick](-2.8,.9) ..controls (0,.1)..(2.8,.9);
\draw [thick](-2.8,-1.7) ..controls (0,-1)..(2.8,-1.7);
\draw [red](2,0) ..node[below right]{$\alpha_j$} controls (1,-.5) ..(.25,0);
\draw (2,0) node{$\bullet$};
\draw [red, thick](0,-1.2) arc (-90:90:.3 and .75);
\draw [red, thick, dashed](0,.3) arc (90:270:.3 and .75);
\draw [red](0, -1.5) node {$\gamma_j$};
\node [rectangle,  scale=.5, fill=black,draw, label=right:$x_j^1$] at (0.23,0) {};
\node [rectangle,  scale=.5, fill=black,draw, label=right:$x_j^2$] at (0.2,-1) {};
\node [rectangle,  scale=.5, fill=gray,draw, label=left:$x_j^3$] at (-.3,-.45) {};
\draw [ thick](-2.8,-1.7) arc (-70:70:.5 and 1.4);
\draw [ thick](-2.8,.9) arc (110:250:.4 and 1.4);
\draw [ thick](2.8,-1.7) arc (-70:70:.5 and 1.4);
\draw [ thick, dashed](2.8,.9) arc (110:250:.4 and 1.4);
\end{tikzpicture}

\caption{A sketch of lasso $\beta_j= \alpha_j * \gamma_j * \alpha_j^-$ used in the proof of Theorem \ref{ThmRealizPers} and the corresponding choice of three equidistant points $x_j^1, x_j^2,$ and $x_j^3$ on a geodesic circle $\gamma_j$. Path $\alpha_j$ connects the basepoint $\bullet$ and point $x_j^1$ on  $\gamma_j$. The underlying space in this particular case is a tube.}
\label{FigL3P}
\end{figure}
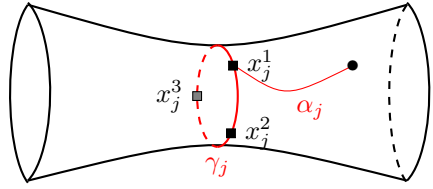
 
\textbf{Homotopy $H$ in $X$.} Fix $r>p$ and choose an $r$-loop $L$ of the form $\bullet = y_0, y_1, \ldots, y_k, y_{k+1}=\bullet$ representing an element of the kernel of $\mu_r^{\pi_1, S}$, i.e., $[L]_{\pi_1}\in \pi_1(\Rips(X,r),\bullet)$ is trivial. Let $\beta_L$ denote a filling of $L$ in $X$, with $\beta^i_L$ representing the corresponding section between $y_i$ and $y_{i+1}$. By assumption we have $[\beta_L]_{\pi_1}\in \L(X, 3r, \pi_1)$. Using Theorem \ref{ThmFin} \textbf{b}  and the fact that $\L(X,*, \pi_1) \leq \pi_1(X, \bullet)$ are normal subgroups we conclude  $\beta_L \simeq \tilde \beta_L * \hat \beta_L$ with $\tilde \beta_L \in \L(X, 3p, \pi_1)$ being a concatenation of lassos generating $\L(X, 3p, \pi_1)$, and $\hat \beta_L$ being a concatenation of geodesic $3 l_{j_{\hat i}}$-lassos $\hat \beta_{j_{\hat i}} = \hat \alpha_{j_{\hat i}} * \hat \gamma_{j_{\hat i}} * \hat \alpha_{j_{\hat i}}^-$ for some ${j_{\hat i}}\in C, l_{j_{\hat i}}< 3r$. In particular, there exists a homotopy $H \colon I \times I \to X, H|_{I \times \{0\}}= \beta_L,  H|_{I \times \{1\}}= \tilde\beta_L * \hat \beta_L$.

\textbf{Homotopy $F$ in $\Rips(S, r)$.} Let $\tilde c$ denote the largest critical value smaller than $r$ and define $\delta = (1/2) \cdot \min \{r-p, r-\tilde c\}$. Let $\mathcal U =\{B(x, \delta/2) \mid x\in X\}$ be an open cover of $X$. Choose a triangulation $\Delta$ of $I \times I$ subordinated to $H^{-1}(\mathcal U)$, i.e., for each simplex of $\Delta$ the diameter of its image by $H$ is less than $\delta$. Further subdivide $\Delta$ so that for each lasso $\alpha * \gamma * \alpha^-$ from the mentioned decomposition of $\tilde \beta_L$ or $\hat \beta_L$:
\begin{description}
 \item [(i)] the points of $I \times I$ corresponding to the endpoints of $\alpha$ and $\alpha^-$ are vertices of $\Delta$;
  \item [(ii)] vertices of $\Delta^{(0)}$ on the domain of $\alpha^-$ precisely backtrack vertices of $\Delta^{(0)}$ on the domain of $\alpha$, meaning that the concatenation of these two $\delta$-samples is a $\delta$-null $\delta$-loop.
\end{description}

For each vertex $v\in \Delta ^{(0)}$ choose $\sigma(v)\in S$ observing the following rules:
\begin{enumerate}
 \item $d(H(v),\sigma(v))< s$;
 \item if $v\in I \times \{0\}$ is a vertex on the domain of $\alpha_L^i$ then $\sigma(v)$ is either $y_i$ or $y_{i+1}$. This implies that $L$ is homotopic in $\Rips(S, r)$ to the $r$-loop defined by $\sigma$ applied to $\Delta^{(0)} \cap I \times \{0\}$;
 \item for each lasso $\alpha * \gamma * \alpha^-$ from the mentioned decomposition of $\tilde \beta_L$ or $\hat \beta_L$, $\sigma$ applied to the vertices on the domain of $\alpha^-$ precisely backtracks $\sigma$ applied to the vertices  on the domain of $\alpha$ (which is possible by (ii)), meaning that the concatenation of these two $s$-samples when mapped by $\sigma$ is $r$-null $r$-loop.
\end{enumerate}
Define now a homotopy $F\colon (I \times I, \Delta)\to \Rips(S, r)$ using the combinatorial structure of triangulation $\Delta$ by mapping $v \mapsto \sigma(v), \forall v\in \Delta^{(0)}$. This rule induces a well defined simplicial map $F$ as 
$$
d(\sigma(v_1), \sigma(v_2))< \delta + 2s < r
$$
for adjacent $v_1, v_2 \in \Delta^{(0)}$.

\textbf{Contract each sample of a lasso:} Take any lasso $\alpha * \gamma * \alpha^-$ from the mentioned decomposition of $\tilde \beta_L$ or $\hat \beta_L$. $F$ restricted to domain of $\gamma$ is contractible in $\Rips(S, r)$ by Proposition \ref{PropSNull}(1) for $\tilde \beta_L$ and Proposition \ref{PropSNull}(2) for $\hat \beta_L$. Remaining $F$ restricted to domain of $\alpha$  and $\alpha^-$ is contractible in $\Rips(S, r)$ by (3) above.
\end{proof}

\begin{Theorem}\label{ThmReconstruct}
 Given a compact geodesic SLSC space $X$, the entire $\pi_1$-persistence of $X$ can be reconstructed from an appropriate finite subset $S\subset X$.
 
Analogous statement holds for $H_1(\_; G)$-persistence of $G$-SLSC spaces for any Abelian group $G$.
\end{Theorem}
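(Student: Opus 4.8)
The plan is to use the SLSC hypothesis exactly where Theorem~\ref{ThmRealizPers} is silent, namely at small parameters. By the SLSC form of Theorem~\ref{ThmPerBasis}, compactness together with the SLSC property lets us choose the index set $J$ to be finite; consequently the set of critical values, which coincides with $\{l_i/3\}_{i\in J}$, is finite. If it is empty then the persistence is the constant functor $\pi_1(X,\bullet)$, and it is recovered by any sufficiently dense finite $S$ via Proposition~\ref{PropIso}. Otherwise let $c_{\min}>0$ be the smallest critical value and fix any $p\in(0,c_{\min})$.

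Next I would feed this $p$ into Theorem~\ref{ThmRealizPers}, which produces a finite sample $S\subset X$ (a sufficiently dense set enlarged by three equidistant points on each relevant critical geodesic circle) for which the maps $\mu^{\pi_1,S}_*$ assemble into an isomorphism of persistences $\{\pi_1(\Rips(X,r),\bullet)\}_{r>p}\cong\{\pi_1(\Rips(S,r),\bullet)\}_{r>p}$. This already recovers from $S$ alone every critical value (each of which is $\geq c_{\min}>p$) together with the full group-theoretic data of the persistence on the ray $(p,\infty)$.

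It then remains to reconstruct the persistence on the initial segment $(0,p]$. Here the key point is that, the critical values being finite and all exceeding $p$, there are no critical values on $(0,c_{\min})$; hence every bonding map $i^{\pi_1,X}_{r,r'}$ with $0<r\leq r'<c_{\min}$ is an isomorphism and the persistence of $X$ is constant on $(0,c_{\min})$, equal to $\pi_1(X,\bullet)$ (the SLSC formula of Theorem~\ref{ThmPerBasis} quotients by nothing in this range). This constant therefore coincides with $\pi_1(\Rips(X,r'),\bullet)$ for any $r'\in(p,c_{\min})$, which we already obtained from $S$ in the previous step. Thus the whole diagram $\{\pi_1(\Rips(X,r),\bullet)\}_{r>0}$ is determined by the persistence of $S$ on $(p,\infty)$, extended to the left as the constant $\pi_1(X,\bullet)$, and the reconstruction is complete.

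The main obstacle, and the reason a literal isomorphism statement as in Theorem~\ref{ThmRealizPers} cannot simply be pushed to all $r>0$, is that for a finite $S$ the complex $\Rips(S,r)$ is disconnected for small $r$, so $\{\pi_1(\Rips(S,r),\bullet)\}_{r>0}$ disagrees with that of $X$ near $0$; this is why the statement is phrased as \emph{reconstruction} rather than isomorphism. What rescues it is precisely the finiteness of $J$: SLSC guarantees a genuine gap $(0,c_{\min})$ free of critical activity, so the left tail is forced to be the constant $\pi_1(X,\bullet)$, and this constant can be read off from $S$ at parameters just above $p$. In the non-SLSC case the critical values may accumulate at $0$, and then no single finite sample could capture the infinitely many relations born at arbitrarily small scales, which explains why SLSC is indispensable. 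The homological statement follows verbatim, replacing $\pi_1$ by $H_1(\_;G)$ and SLSC by $G$-SLSC throughout and invoking the homological halves of Theorem~\ref{ThmPerBasis}, Theorem~\ref{ThmRealizPers}, and Proposition~\ref{PropIso}.
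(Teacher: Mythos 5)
Your proposal is correct and follows essentially the same route as the paper: invoke Theorem~\ref{ThmRealizPers} with $p$ at (or just below) the smallest critical value $c_{\min}$ to recover the persistence on $(p,\infty)$ from a finite sample, and then use Theorem~\ref{ThmPerBasis} to see that the persistence of $X$ below $c_{\min}$ is constantly $\pi_1(X,\bullet)$ with identity bonding maps, which is read off from the sample at parameters just above $p$. Your added remarks on why the statement is a reconstruction rather than an isomorphism (disconnectedness of $\Rips(S,r)$ for small $r$) and on where SLSC enters (finiteness of $J$, hence no accumulation of critical values at $0$) are accurate and consistent with the paper's discussion.
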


\begin{proof}
 If $X$ is simply connected there is nothing to prove. Setting $p$ to be the smallest $\pi_1$-critical value of $X$, use Theorem \ref{ThmRealizPers} to reconstruct  $\{\pi_1(\Rips(X,r),\bullet)\}_{r>p}$. Theorem \ref{ThmPerBasis} implies  $\{\pi_1(\Rips(X,r),\bullet)\}_{r\leq p}\cong \{\pi_1(X, \bullet )\}_{r \leq p}$ with bonding maps being identites. 
 \end{proof}

It is easy to see that Theorem \ref{ThmReconstruct} does not hold for non-compact spaces or non-SLSC spaces. From \cite{AA} we can extract that $\cRips(S^1, 1/3)$ is homotopy equivalent to the uncountable wedge of two-dimensional spheres, where $S^1$ is a geodesic circle of circumference $1$.
Hence neither  an analogous statement of Theorem \ref{ThmReconstruct} nor that of Theorem \ref{ThmRealizRips} can hold for higher-dimensional persistence via closed complexes.

Given a fixed $X$, there may be various representations of persistence  provided by Theorem \ref{ThmPerBasis}. Such representations may use different geodesic circles $\gamma_i$ if, for example, some such geodesic circle $\gamma_j$ is freely homotopic to a different geodesic circle of the same circumference. 
It is not hard to see that a sufficiently dense subset  satisfies conclusions of Theorem \ref{ThmReconstruct} if and only if it contains three equidistant points on each geodesic circle $\gamma_i$ arising from some representation of persistence as described by Theorem \ref{ThmPerBasis}. 
This implies that a random sufficiently dense sample can't be expected to satisfy the conclusions of Theorem \ref{ThmReconstruct}, unless the underlying probability distribution generates three equidistant points on the corresponding geodesic circles with non-negative probability.

\section{Order on approximating persistences}

In this section we introduce a partial order on the set of all  persistences induced by subsets of $X$. Using this order we may formalize the following statement: persistence of $X$ is the minimum of persistences arising from finite samples.
The this statement shows that when approximating persistence of $X$ by finite subsets, the best approximation is the minimal one (instead of some sort of average). The intuitive explanation is the following. Critical values of persistence of $X$ arise from geodesic circles. The corresponding critical values appearing in persistence of a finite sample $S$ arise from corresponding approximations of geodesic circles using points of $S$, which are longer than the original geodesic circle. This gives us further clues for computational implementation of such persistence: when improving the sample, it is enough to increase the density only around critical circles.


\begin{Definition}\label{DefPers}
 Suppose $G$ is a group and $r_0>0$. An \textbf{initially constant surjective persistent group} starting with $G$ at $r_0$ is a persistent group consisting of a collection of groups  $\mathcal H = \{G_r\}_{r>r_0}$ along with surjective commuting bonding homomorphisms $i_{p,q}\colon G_p \to G_q$ for all $p<q$, with an additional property that there exists $\eps_{\mathcal H}>r_0$, so that for all $p<q<\eps_{\mathcal H}$ maps $i_{p,q}$ are identities on $G$. 
 
 A \textbf{homomorphism} $\phi \colon \mathcal H \to  {\mathcal H'}$ between two such persistences (with the same parameter bound) $\mathcal H = \{G_r\}_{r>r_0}$ and $\mathcal H' = \{G'_r\}_{r>r_0}$ is a collection of homomorphisms $\phi_r \colon G_r \to G'_r$, which commute with the bonding maps and are identities for $r$ close enough to $r_0$ (this implies that all $\phi_r$ are surjective).
Such a homomorphism is an \textbf{isomorphism} if all $\phi_i$ are isomorphisms.
 \end{Definition}

\begin{Definition}
 Suppose $X$ is a compact geodesic space and $c$ is the smallest critical value of $\pi_1$-persistence via open Rips complexes. Choose $s< c/2$. Define a collection of initially constant surjective persistences starting with $\pi_1(X,\bullet)$: 
 $$
 FP(X, s)= \Big\{ \{\pi_1(\Rips(A, r), \bullet)\}_{r>2s} \mid A \quad s\textit{-dense finite subset of } X \Big\}.
 $$
 The corresponding bonding maps are induced by the standard inclusions attached to the Rips filtration. The facts that $c>0$ exists and that defined persistences are indeed initially constant at $\pi_1(X,\bullet)$ follow from the results of \cite{ZV}.
\end{Definition}
 

\begin{Definition}
 \label{DefPO}
Let $G$ be a group and $r_0>0$. Suppose $\mathcal H= \{G_r\}_{r>r_0}$ and $ {\mathcal H'}= \{G'_r\}_{r>r_0}$ are initially constant surjective persistences starting with $G$. Define an order:
$\mathcal H \leq  {\mathcal H'} $ iff there exists a homomorphism $\phi \colon \mathcal H' \to  {\mathcal H}$. It is easy to see that relation $\leq$ is a partial order on the set of initially constant surjective persistences starting with $G$ at $r_0$.
\end{Definition}

If $A\subset B$ are $s$-dense subsets of $X$, where $c>2s$ is the smallest critical value of $\pi_1$-persistence of $X$ via open Rips complexes, then 
$\{\pi_1(\Rips(A, r), \bullet)\}_{r>2s} \geq \{\pi_1(\Rips(B, r), \bullet)\}_{r>2s}$. Since smaller elements in our partial order represent better approximations of persistence, the last statement formalises the heuristic that larger sets provide a better approximation. Also, it shows that for a collection of $s$-dense subsets of $X$, their union is the lower bound in our poset.

\begin{Prop}
 Let $G$ be a group and $r_0>0$. Suppose $\mathcal H= \{G_r\}_{r>r_0}$ and $ {\mathcal H'}= \{G'_r\}_{r>r_0}$ are initially constant surjective persistences starting with $G$, whose bonding maps are $i_{*,*}$ and $i'_{*,*}$ respectively. Assume $\eps=\eps_{\mathcal H'}=\eps_{\mathcal H}$ and choose $p\in (r_0, \eps)$. Then the relationship $\mathcal H \leq  {\mathcal H'} $ is equivalent to the following condition: $\ker i'_{p,q} \leq \ker i_{p,q} \leq G, \forall q>p$.  
 \end{Prop}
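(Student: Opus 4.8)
The plan is to exploit the fact that, once everything is based at the fixed parameter $p$, both persistences are completely determined by increasing families of normal subgroups of $G$. Concretely, for every $r>p$ the bonding map $i_{p,r}\colon G_p\to G_r$ is surjective and, since $p\in(r_0,\eps)$, its source is $G_p=G$; writing $K_r=\ker i_{p,r}\trianglelefteq G$ we get a canonical identification $G_r\cong G/K_r$ (via $i_{p,r}$) under which the remaining bonding maps $i_{q_1,q_2}$ for $p<q_1<q_2$ become the quotient projections $G/K_{q_1}\to G/K_{q_2}$; this is forced by the cocycle relation $i_{q_1,q_2}\circ i_{p,q_1}=i_{p,q_2}$ together with surjectivity of $i_{p,q_1}$. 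The same applies to $\mathcal H'$ with kernels $K'_r=\ker i'_{p,r}$. So I would first record this reduction, after which the claim becomes the assertion that a homomorphism $\mathcal H'\to\mathcal H$ exists iff $K'_q\leq K_q$ for all $q>p$.

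For the forward implication, suppose $\phi\colon\mathcal H'\to\mathcal H$ exists. The first (mildly subtle) step is to observe that $\phi_p$ is forced to be $\mathrm{id}_G$: although $p$ need not be close to $r_0$, below $\eps$ all bonding maps in both persistences are identities on $G$, so choosing $r\in(r_0,p)$ with $\phi_r=\mathrm{id}$ and using commutativity $\phi_p\circ i'_{r,p}=i_{r,p}\circ\phi_r$ gives $\phi_p=\mathrm{id}$. With $\phi_p=\mathrm{id}$ in hand, the naturality square for the pair $(p,q)$ reads $\phi_q\circ i'_{p,q}=i_{p,q}$, and a one-line chase of an element $g\in K'_q$ (so $i'_{p,q}(g)=e$, whence $i_{p,q}(g)=\phi_q(i'_{p,q}(g))=e$) yields $K'_q\leq K_q$.

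For the reverse implication I would build $\phi$ by hand from the kernel inequalities. On $(r_0,p]$ set $\phi_r=\mathrm{id}_G$ (legitimate, as there both groups are $G$), and for $r>p$ define $\phi_r$ by $\phi_r\big(i'_{p,r}(g)\big)=i_{p,r}(g)$. Well-definedness is exactly where the hypothesis enters: if $i'_{p,r}(g_1)=i'_{p,r}(g_2)$ then $g_1g_2^{-1}\in K'_r\leq K_r$, so $i_{p,r}(g_1)=i_{p,r}(g_2)$; the map is then visibly a homomorphism and is surjective since $\phi_r\circ i'_{p,r}=i_{p,r}$ is. It remains to check compatibility with bonding maps, which I would verify in the three regimes $q_1<q_2\leq p$, $q_1\leq p<q_2$, and $p<q_1<q_2$; in each case the check collapses to the cocycle relations $i'_{q_1,q_2}\circ i'_{p,q_1}=i'_{p,q_2}$ and $i_{q_1,q_2}\circ i_{p,q_1}=i_{p,q_2}$ once a preimage of the argument under $i'_{p,q_1}$ is chosen. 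Since $\phi_r=\mathrm{id}$ on $(r_0,p]$ it is the identity near $r_0$, so $\phi$ is a homomorphism of persistences and $\mathcal H\leq\mathcal H'$.

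The only genuinely delicate points, where I would spend the most care, are the two threshold issues tied to $p$: in the forward direction the argument that $\phi_p$ must be the identity even when $p$ is far from $r_0$ (this is what makes the criterion independent of the chosen $p\in(r_0,\eps)$), and in the reverse direction the well-definedness of $\phi_r$ for $r>p$, which is precisely where $K'_r\leq K_r$ is indispensable. Everything else is routine diagram chasing, and the homological analogue needs no change since only the group/quotient formalism is used.
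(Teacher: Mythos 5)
Your proof is correct and follows essentially the same route as the paper's: the forward direction by chasing the naturality square after noting $\phi_p=\mathrm{id}$, and the reverse by defining $\phi_q$ on $G'_q=\mathrm{im}\, i'_{p,q}$ via $\phi_q(i'_{p,q}(x))=i_{p,q}(x)$, with well-definedness supplied exactly by $\ker i'_{p,q}\leq \ker i_{p,q}$. You simply spell out the details (the identification $G_r\cong G/K_r$, why $\phi_p$ is forced to be the identity, the case analysis for compatibility) that the paper's terse proof leaves implicit.
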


\begin{proof}
Since persistences are initially surjective, the choice of $p$ does not play any role. If $\mathcal H \leq  {\mathcal H'} $, then $\ker i'_{p,q} \leq \ker i_{p,q} \leq G, \forall q>p$ by the commutativity of a homomorphism $\phi \colon \mathcal H' \to  {\mathcal H}$ with the bonding maps.

If $\ker i'_{p,q} \leq \ker i_{p,q}, \forall q>p$, then $\phi$ consists of well defined homomorphisms $f_q\colon G'_q \to G_q$ determined by the following conditions:
\begin{itemize}
\item  $f_r$ is the identity $\forall r\leq p$;
 \item $f_q(i'_{p,q}(x))=f_q(i_{p,q}(x)), \forall x\in G$.
\end{itemize}
Note that the  kernel of $f_q$ is $\ker i_{p,q} / \ker i'_{p,q}$.
\end{proof}

\begin{Rem}
While the definitions of this section refer to initially constant surjective persistence groups, it is clear that the same definitions can be used to define initially constant surjective persistence modules and a partial order therein.

Suppose  $\mathcal H$ and  ${\mathcal H'}$ are initially constant surjective persistence modules starting with a finite dimensional vector space $G$ at $r_0>0$. It is clear that each of the intervals of the barcode of $\mathcal H$ (and of ${\mathcal H'}$) is of the form $(r_0, d_i)$ or $(r_0, d_i]$. For any $p\in (r_0, \eps_\mathcal{H})$ and $q>p$, the number of intervals of the barcode of $\mathcal H$ not containing $q$ is $\dim \ker i_{p,q}$. We conclude the following: if $\mathcal H \leq  {\mathcal H'} $, then the intervals in the barcode of $\mathcal H$ are shorter or equal than those of $\mathcal H'$ (when both sets of intervals are sorted by their length, i.e., the longest interval of ${\mathcal H'}$ is longer than or equal to the longest interval of ${\mathcal H}$, etc.). It is easy to see that this implication is not reversible.
\end{Rem}

The following theorem explains how the entire $\pi_1$-persistence of (usually uncountable space) $X$ is the minimal persistence, obtained by finite samples.

\begin{Theorem}
 \label{ThmSamplingMin}
 [Sampling minimal property]
 Suppose $X$ is a compact geodesic SLSC space, with $c>0$ being the smallest critical value of $\pi_1$-persistence of $X$ via open Rips complexes. Choose $s< c/2$. Then $\{\pi_1(\Rips(X, r), \bullet)\}_{r>2s}$ (recall that it completely describes $\{\pi_1(\Rips(X, r), \bullet)\}_{r>0}$) is isomorphic to the minimal element in $FP(X,s)$. 
\end{Theorem}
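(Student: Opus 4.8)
The plan is to produce a single element of $FP(X,s)$ that is simultaneously isomorphic to $\mathcal H_X:=\{\pi_1(\Rips(X,r),\bullet)\}_{r>2s}$ and a lower bound for the whole collection; such an element is then forced to be the minimum. First I would record that $\mathcal H_X$ is itself an initially constant surjective persistent group starting with $\pi_1(X,\bullet)$ at $2s$ in the sense of Definition \ref{DefPers}: its bonding maps are surjective by Proposition \ref{PropRips}(9), and by the results of \cite{ZV} (equivalently Theorem \ref{ThmPerBasis}) the groups $\pi_1(\Rips(X,r),\bullet)$ are canonically identified with $\pi_1(X,\bullet)$ via identity bonding maps for all $r$ below the smallest critical value $c>2s$. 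Hence $\mathcal H_X$ lies in the same poset as the members of $FP(X,s)$ and may be compared with them under the order of Definition \ref{DefPO}.

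\textbf{Lower bound.} Fixing an $s$-dense finite $A\subset X$ containing $\bullet$ and writing $\mathcal H_A:=\{\pi_1(\Rips(A,r),\bullet)\}_{r>2s}\in FP(X,s)$, I would show that the maps $\mu^{\pi_1,A}_r$ (for $r>2s$) assemble into a homomorphism $\mathcal H_A\to\mathcal H_X$. Each $\mu^{\pi_1,A}_r$ is surjective by Proposition \ref{PropMapsFinApprox1}(1) (using $2s\le r$), the family commutes with the bonding maps by Proposition \ref{PropDiagComm}, and for $2s<r<c$ Proposition \ref{PropIso} makes $\mu^{\pi_1,A}_r$ an isomorphism which, under the common identification of both persistences with $\pi_1(X,\bullet)$, is the identity. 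By Definition \ref{DefPO} this gives $\mathcal H_X\le\mathcal H_A$, and since $A$ is arbitrary, $\mathcal H_X$ is a lower bound for $FP(X,s)$.

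\textbf{Attainment.} Since $X$ is compact geodesic SLSC, Theorem \ref{ThmReconstruct} supplies a finite $S\subset X$ for which $\mu^{\pi_1,S}_*$ induces an isomorphism $\{\pi_1(\Rips(S,r),\bullet)\}_{r>0}\cong\{\pi_1(\Rips(X,r),\bullet)\}_{r>0}$. The construction underlying that theorem (via Theorem \ref{ThmRealizPers}) starts from an $s$-dense sample, admissible here because $s<c/2$, and adjoins three equidistant points on each critical geodesic circle; since enlarging a finite sample keeps it $s$-dense and preserves these distinguished points, $S$ may be taken $s$-dense. Restricting to $r>2s$ yields $\mathcal H_S\in FP(X,s)$ with $\mathcal H_S\cong\mathcal H_X$. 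Combined with the previous step, $\mathcal H_S\cong\mathcal H_X\le\mathcal H_A$ for every $\mathcal H_A\in FP(X,s)$, so $\mathcal H_S$ is the minimum of $FP(X,s)$ and is isomorphic to $\mathcal H_X$.

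The one genuinely delicate point I expect is verifying that $\mu^{\pi_1,A}_r$ is the \emph{identity}, not merely an isomorphism, for $r$ near $2s$, i.e.\ that the canonical identifications of $\mathcal H_A$ and $\mathcal H_X$ with $\pi_1(X,\bullet)$ are compatible with $\mu$; this is exactly what upgrades $\mu$ to a homomorphism in the strict sense of Definition \ref{DefPers} rather than a bare natural surjection. The real substance of the attainment step is carried entirely by Theorem \ref{ThmReconstruct}, whose use of three equidistant points on critical circles is what guarantees the minimum is realized inside $FP(X,s)$ rather than only approached.
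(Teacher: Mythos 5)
Your proposal is correct and follows essentially the same route as the paper: the lower bound via the inclusion-induced maps $\mu^{\pi_1,A}_r$ (the paper cites the paragraph after Definition \ref{DefPO} with $B=X$ for this), and attainment via the finite sample supplied by Theorem \ref{ThmRealizPers}. You simply spell out the details the paper leaves implicit, including the two points it glosses over (that the identifications with $\pi_1(X,\bullet)$ for $r$ near $2s$ are compatible, and that the realizing sample can be taken $s$-dense), both of which you resolve correctly.
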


\begin{proof}
$\mathcal{H}=\{\pi_1(\Rips(X, r), \bullet)\}_{r>2s}$ is smaller than or equal to  any element of $FP(X,s)$ by the paragraph following Definition \ref{DefPO}.

 By  Theorem \ref{ThmRealizPers} there exists an element in $FP(X,s)$, which is isomorphic to $\mathcal{H}$, hence it is formally smaller or equal to $\mathcal{H}$. We conclude that this element is  the minimal element of $FP(X,s)$. 
 \end{proof}

\begin{Remark}
 By results of \cite{ZV} the statement of Theorem \ref{ThmSamplingMin} also holds for closed Rips filtrations. Furthermore, for any Abelian group $G$, the whole of this section could be stated for $H_1(\_; G)$-persistence and compact geodesic $G$-SLSC spaces. In our future work we plan to expand this result to other settings when possible, i.e., to higher dimensions, closed filtrations, etc.
\end{Remark}

\section{Concluding remarks and future work}

In this paper we showed that even though a one-dimensional persistence of a geodesic space is defined by Rips complexes on uncountably many points, the persistence itself is nicely approximated and extractable from finite samples. These results along with other observation on the geometry of critical values described in \cite{ZV} form a basis for the upcoming work on computational approximation of such persistence. On the other hand we are developing stronger finiteness results for the $\pi_1$-persistence in such cases. 

Analogous results mostly hold for closed Rips filtrations (again, see \cite{ZV} for the relationship). The exceptions are Theorem \ref{ThmRealizRips} and Corollary \ref{CorAaa} where caution is needed at critical values as described in Remark \ref{RemException}.
A corresponding theory could also be developed for filtrations induced by \v Cech complexes.
In the future we plan to look at analogous results in higher dimensions, which would assist with the computation of higher-dimensional persistence. However, it is clear  that such results are not always obtainable. Results of \cite{AA, AAS} imply that finiteness results can not hold for higher-dimensional persistences via closed filtrations. It is also apparent that compactness is required for our finiteness results. In the absence of a geodesic metric even the fundamental groups of Rips complexes of very simple spaces (such as $\{0,1\}\times [0,1]\subset \RR^2$ for closed Rips filtration) may be uncountable \cite{Cha2}.



\begin{thebibliography}{99}

\bibitem{AA}
Micha\l{} Adamaszek, Henry Adams. 
\emph{The Vietoris-Rips complexes of a circle}. 	
Pacific Journal of Mathematics, Vol. 290 (2017), No. 1, 1--40.

\bibitem{AAS}
M.  Adamaszek, H. Adams, and S. Reddy, 
\emph{On Vietoris-Rips Complexes Of Ellipses}, 
	arXiv:1704.04956.

\bibitem{AFV}
Micha\l{} Adamaszek, Florian Frick, and Adrien Vakili. 
\emph{On homotopy types of Euclidean Rips complexes}. 	arXiv:1602.04131.

\bibitem{Att}D. Attali, A. Lieutier, and D. Salinas. Vietoris-Rips complexes also provide topologically correct reconstructions of sampled shapes. In Proceedings of the 27th annual ACM symposium on Computational geometry, SoCG '11, pages 491--500, New York, NY, USA, 2011. ACM.


\bibitem{BD}
P. Brendel, P. D\l otko, G. Ellis, M. Juda, and M. Mrozek,
\emph{Computing fundamental groups from point clouds}, 
Applicable Algebra in Engineering, Communication and Computing
March 2015, Volume 26, Issue 1--2, pp 27--48.

\bibitem{CC}
J.W. Cannon and G.R. Conner,
\emph{On the fundamental groups of one-dimensional spaces},
Topol. Appl., 153 (2006), pp. 2648--2672.

\bibitem{Comb}
M. Cencelj, J. Dydak, A. Vavpeti\v c, and \v Z. Virk: 
\emph{A combinatorial approach to coarse geometry}, 
Topology and its Applications 159(2012), 646--658. 

\bibitem{Stab2}
F. Chazal, D. Cohen-Steiner, M. Glisse, L. Guibas, and S. Oudot, \emph{Proximity of persistence modules and their diagrams}. In Proceedings of the 25th Annual ACM Symposium on Computational
Geometry (SoCG), pages 237--246, 2009.

\bibitem{Cha1} Chazal, F., de Silva, V., Glisse, M., and Oudot, S., \emph{The Structure and Stability of Persistence Modules}, Springer Briefs in Mathematics 2016.

\bibitem{Cha2} Chazal, F., de Silva, V.,  and Oudot, S., 
\emph{Persistence stability for geometric complexes}, Geom. Dedicata (2014) 173: 193. 

\bibitem{Stab1}
D. Cohen-Steiner, H. Edelsbrunner, and J. Harer, \emph{Stability of persistence diagrams}. Discrete
\& Computational Geometry, 37(1):103--120, 2007.

\bibitem{DV}J. Dydak, and \v Z. Virk, 
\emph{An alternate proof that the fundamental group of a Peano continuum is finitely presented if the group is countable}, Glas. Mat. Ser. III, 46 (2011).

\bibitem{E} J. Erickson and K. Whittlesey: \emph{Greedy optimal homotopy and homology generators}. In: Proceedings of the sixteenth annual ACM-SIAM symposium on discrete algorithms, pages 1038--1046 (electronic). ACM, New York, (2005).


\bibitem{Hat}Allen Hatcher. \emph{Algebraic topology}. Cambridge University Press, Cambridge, 2002.


\bibitem{Haus}Jean-Claude Hausmann. \emph{On the Vietoris-Rips complexes and a cohomology theory for metric spaces.}
Annals of Mathematics Studies, 138:175--188, 1995.

\bibitem{Lat}Janko Latschev. \emph{Vietoris-Rips complexes of metric spaces near a closed Riemannian manifold.} Archiv der
Mathematik, 77(6):522--528, 2001.

\bibitem{Le} David Letscher. \emph{On persistent homotopy, knotted complexes and the Alexander module}, Proceeding
ITCS '12 Proceedings of the 3rd Innovations in Theoretical Computer Science Conference, 428--441.


\bibitem{ZV} 
\v Z. Virk, 
\emph{1-Dimensional Intrinsic Persistence of geodesic spaces}, 
arXiv:1709.05164v3.


\end{thebibliography}
\end{document}